\newtheorem{defi}{Definition}[section]
\newtheorem{theo}[defi]{Theorem}
\newtheorem{prop}[defi]{Proposition}
\newtheorem{lem}[defi]{Lemma}
\newtheorem{con}[defi]{Conjecture}
\newtheorem{cor}[defi]{Corollary}
\newcounter{claimcount}
\theoremstyle{remark}
\title{Information dissemination and confusion in signed networks}
\author{Ligang Jin$^1$ Eckhard Steffen$^2$ \\
\footnotesize
$^1$ School of Mathematical Sciences, Zhejiang Normal University, Yingbin Road 688, 321004 Jinhua,
China\\
\footnotesize	$^2$ Department of Mathematics, Paderborn University, Warburger Str.\ 100, 33098 Paderborn,
		Germany
\\ \footnotesize ligang.jin@zjnu.cn, es@upb.de}
\date{}
\begin{document}

\maketitle

\begin{abstract}
We introduce a model of information dissemination in signed networks. It is a discrete-time process in which uninformed actors incrementally receive information from their informed neighbors or from the outside. Our goal is to minimize the number of confused actors - that is, the number of actors who receive contradictory information. 
We prove upper bounds for the number of confused actors in signed networks and 
in equivalence classes of signed networks.
In particular, we show that there are signed networks where, for any information placement 
strategy, almost 
60\% of the actors are confused. Furthermore, this is also the case when considering 
the minimum number of confused actors within an equivalence class of signed graphs.
\end{abstract}

{\bf Keywords:} signed graphs, spread of social influence, dissemination of information,
cyber-physical social networks, autonomous networks, burning number

\section{Introduction and basics}

In 1953, Harary \cite{Balance} introduced the notion of signed graphs
and laid the foundation for the study of signed graphs. 
As an early application of signed graphs,
Cartwright and Harary \cite{Cartwright_Harary_1956}  set Heider's theory \cite{Heider_1944} of structural balance in (social) networks into the graph theoretical concept of balance in signed graphs. 
From that on, signed graphs became a very active area of research.
Signed graphs are of significant interest from the mathematical point of view but there are many applications of signed graphs in 
many disciplines such as natural sciences and social sciences.
Thus, they are studied from very different perspectives.
The dynamic survey \cite{zaslavsky2012mathematical} gives an impression of the vast existing literature on signed graphs and related topics.

Signed graphs are natural concepts for designing and analyzing networks, see \cite{leskovec2010signed}. 
The actors of the network are represented by the vertices of a graph and 
edges represent a relation between (two) actors. Thus, the embedding of an actor into a network provides some information about their role within it.
Prominent measures of an actor's role in the network are centrality and community detection, see \cite{bloch2023centrality, liu2020simple, sun2020stable}.
These parameters provide insights into the network structure and they are used for 
interventions in (social) networks such as selecting actors 
for information placement \cite{intervensions}. Such research approaches 
are becoming increasingly relevant with the transition to cyber-physical social systems \cite{zhou2019cyber}, where technical and human actors are networked together. 
In these systems, the dissemination of information can
already be considered in their design \cite{dressler2018cyber}. 

We will introduce a model of information dissemination in signed networks. It is a discrete-time process where uninformed actors receive information from their informed neighbors or from the outside. As in social networks, an actor may receive contradictory information from members of its community. In this case, it is considered confused and does not actively spread information in the following. The process ends after a finite number of steps when every network actor is either informed or confused. In this paper we are interested in minimizing the number of confused actors. 

The paper is organized as follows. 
Next we give the necessary definitions and basic results on signed graphs.
In Section \ref{Sec: (G,sigma) special cases}, we introduce an information dissemination algorithm for signed graphs. We determine the minimum number of confused vertices for some signed graphs 
and prove some upper bounds for this parameter. We show that there are balanced 
signed graphs where, for any information placement strategy, almost 50\% 
of the vertices are confused. In general, there are 
signed graphs where, for any information placement strategy, almost 
%\(\frac{3}{5}\)
60\% of the vertices are confused. 
 
In Section \ref{Sec: Equivalence Classes}
we slightly relax the information dissemination algorithm which allows to study 
the minimum number of confused vertices within an equivalence class of signed graphs.
Here the situation changes for balanced signed graphs. We show that for any balanced
signed graph there exists a relaxed information placement strategy such that no vertex is confused. However, in general we stuck with a portion of almost 60\% of confused vertices. 
We then show that the minimum number of confused vertices in the equivalence class of $(G,\sigma)$
is equal to the minimum number of confused vertices in the equivalence class of $(G,-\sigma)$. 
We close with Section \ref{Sec: Problems}, where we state some problems and conjectures.

\subsection{Basics on graphs and signed graphs}
We consider finite simple graphs. The vertex set of a graph $G$ is denoted by $V(G)$
and its edge set by $E(G)$. Let $S \subseteq V(G)$. The subgraph of $G$ which is induced by $S$ is denoted by $G[S]$. 
Denote by $\partial_G(S)$ the set of edges having exactly one end in $S$.
The set of neighbors of vertices of $S$ in $V(G) \setminus S$
is denoted by $N_G(S)$. If $S = \{v\}$, then we write $N_G(v)$ instead of 
$N_G(\{v\})$. Let $s,t \geq 1$ be integers. The complete graph on 
$t$ vertices is denoted by $K_t$ and $K_{s,t}$ denotes the complete bipartite graph 
with partition sets $A, B$ with $\vert A \vert = s$ and $\vert B \vert = t$.

A \emph{signed graph} $(G,\sigma)$ is a graph $G$ together with a function 
$\sigma : E(G) \rightarrow \{ \pm  \}$, where $\{ \pm \}$ is seen as a multiplicative group.
The function $\sigma$ is called a \emph{signature} of $G$ and $\sigma(e)$ is called the 
\emph{sign} of an edge $e$. An edge $e$ is \emph{negative} if $\sigma(e) = -$ and
it is \emph{positive} otherwise. 
The set of negative edges is denoted by $E_{\sigma}^-$, and 
$E(G) \setminus E_{\sigma}^-$ is the set of positive edges. 
If $\sigma(e)=+$ for all $e \in E(G)$, then $\sigma$ is the \emph{all-positive} signature and 
it is denoted by $\texttt{\bf +}$,
and if $\sigma(e)=-$ for all $e \in E(G)$, then $\sigma$ is the \emph{all-negative} signature and it is denoted by $\texttt{\bf -}$.
The function $-\sigma$ defines another signature of $G$ which reverses all the signs of edges by $\sigma$.
The graph $G$ is sometimes called the \emph{underlying graph} of the signed graph $(G,\sigma)$.  

Let $(G',\sigma_{|_{E(G')}})$ be a subgraph of $(G,\sigma)$. 
The sign of $(G',\sigma_{|_{E(G')}})$ is the product of the signs
of its edges. 
A \emph{circuit} is a connected 2-regular graph. A circuit of length $n \geq 3$ will be denoted by $C_n$.
A circuit is \emph{positive} if its sign is $+$ and \emph{negative} otherwise. 
A subgraph $(G',\sigma_{|_{E(G')}})$ is \emph{balanced}
 if all circuits in $(G',\sigma_{|_{E(G')}})$ are positive, otherwise it is \emph{unbalanced}. Furthermore, 
positive (resp., negative) circuits are also often 
called balanced (resp., unbalanced) circuits.

A \emph{switching} of a signed graph $(G,\sigma)$ at $X \subseteq V(G)$ defines a signed graph $(G,\sigma')$ which is obtained from $(G,\sigma)$ by reversing the sign of each edge of the edge cut $\partial_G(X)$, 
i.e.~$\sigma'(e) = - \sigma(e)$ if $e \in \partial_G(X)$ and
$\sigma'(e) = \sigma(e)$ otherwise.
If $X= \{v\}$, then we also say that 
$(G,\sigma')$ is obtained from $(G,\sigma)$ by switching at $v$.
Switching defines an equivalence relation on the set of all signed graphs on $G$. 
We say that $(G,\sigma_1)$ and $(G,\sigma_2)$ are \emph{equivalent} if they can be obtained from each other by a switching at a vertex set $X$.  
We also say that $\sigma_1$ and $\sigma_2$ are equivalent signatures of $G$.
Harary \cite{Balance} proved the following characterization of balanced signed graphs.

\begin{theo} [\cite{Balance}] \label{Thm: character_balanced}
A signed graph $(G,\sigma)$ is balanced if and only if $V(G)$ can be partitioned into two sets $U_1$ and $U_2$ (possibly empty)
such that all edges of $E(G[U_1]) \cup E(G[U_2])$ are positive and all edges of $\partial_G(U_1)$ are negative.  
\end{theo}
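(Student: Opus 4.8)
The plan is to prove both implications directly, with the easy direction being the sufficiency of the partition and the substantive direction its necessity.

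For the ``if'' direction, suppose $V(G)$ admits such a partition into $U_1$ and $U_2$. I would take an arbitrary circuit $C$ in $(G,\sigma)$ and count its negative edges. Every negative edge of $C$ lies in $\partial_G(U_1)$, and traversing $C$ once returns to the starting vertex, so $C$ must switch between $U_1$ and $U_2$ an even number of times. Hence $C$ contains an even number of edges from $\partial_G(U_1)$, that is, an even number of negative edges, while all its remaining edges are positive. Its sign is therefore $+$, and since $C$ was arbitrary, $(G,\sigma)$ is balanced.

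For the ``only if'' direction, assume $(G,\sigma)$ is balanced. Since both balance and the desired partition are defined componentwise, I would treat each connected component separately and then take unions, allowing either part to be empty; so assume $G$ is connected and fix a spanning tree $T$ with a root $r$. For each vertex $v$, let $\tau(v) \in \{\pm\}$ be the sign of the unique path from $r$ to $v$ in $T$, with $\tau(r) = +$, and set $U_1 = \{v : \tau(v) = +\}$ and $U_2 = \{v : \tau(v) = -\}$. The key claim is that every edge $uv \in E(G)$ satisfies $\sigma(uv) = \tau(u)\tau(v)$. For a tree edge this is immediate from the definition of $\tau$. For a non-tree edge $uv$, the paths in $T$ from $r$ to $u$ and from $r$ to $v$, together with $uv$, determine a unique circuit; since $(G,\sigma)$ is balanced this circuit is positive, and comparing its sign with the signs of the two tree paths gives $\sigma(uv)\tau(u)\tau(v) = +$, i.e.\ $\sigma(uv) = \tau(u)\tau(v)$. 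Granting the claim, a positive edge joins two vertices of equal $\tau$-value and hence lies inside $U_1$ or inside $U_2$, while a negative edge joins vertices of opposite $\tau$-value and hence lies in $\partial_G(U_1)$; this is precisely the required partition.

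The main obstacle is establishing the claim $\sigma(uv) = \tau(u)\tau(v)$ for non-tree edges, which is where balance is actually used. The care needed is to track signs when the two tree paths share an initial segment from $r$ to their lowest common ancestor $w$: the shared edges are traversed in both paths, so their contributions to $\tau(u)\tau(v)$ cancel, leaving exactly the edges of the genuine circuit through $w$, $u$, and $v$. I would verify this cancellation explicitly, invoke the positivity of that circuit, and conclude $\sigma(uv) = \tau(u)\tau(v)$ as stated. (Equivalently, this partition is the one for which switching at $U_1$ turns $(G,\sigma)$ into the all-positive signed graph, which offers a useful sanity check.)
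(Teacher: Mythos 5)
Your proof is correct. Note, however, that the paper itself gives no proof of this statement: it is Harary's classical characterization of balance, cited directly from the 1953 reference, so there is no internal argument to compare yours against. What you have written is the standard (essentially Harary's) proof: the parity-of-crossings argument for sufficiency, and for necessity the spanning-tree potential $\tau$ with the key identity $\sigma(uv)=\tau(u)\tau(v)$, verified on tree edges by definition and on non-tree edges via the fundamental circuit through the lowest common ancestor, where the shared initial segment cancels because signs square to $+$. All steps check out, including the componentwise reduction (circuits live inside components, and unions of the per-component partitions inherit the required edge-sign properties). Your closing parenthetical is also the right sanity check: switching at $U_1$ produces $(G,\texttt{\bf +})$, which is exactly the reformulation the paper draws from this theorem immediately after stating it.
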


By Theorem \ref{Thm: character_balanced} we have that $(G, \sigma)$ is balanced if and only if it is equivalent to $(G,\texttt{\bf +})$. 
A signed graph $(G,\sigma)$ is \emph{antibalanced} if and only if it is equivalent to 
$(G,\texttt{\bf -})$. 

It turns out that signed graphs are completely determined in terms of their
negative (positive) circuits. 

\begin{theo} [\cite{SignedGraphs}] \label{equ_classes}
Two signed graphs $(G,\sigma)$ and $(G,\sigma')$ are equivalent if and only if they have the same set of negative circuits. 
\end{theo}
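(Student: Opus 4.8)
The statement is the classical Zaslavsky characterization, and I would prove the two implications separately: the forward direction rests on a single parity observation, while the converse reduces cleanly to Harary's balance criterion (Theorem \ref{Thm: character_balanced}).

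For the forward implication, suppose $(G,\sigma')$ is obtained from $(G,\sigma)$ by switching at $X \subseteq V(G)$. The key point is that switching preserves the sign of every circuit. Indeed, let $C$ be any circuit and traverse it as a closed walk; each time the walk passes from $X$ into $V(G)\setminus X$ it must later return, so $C$ meets the cut $\partial_G(X)$ in an even number of edges. Since switching reverses exactly the signs of the edges of $\partial_G(X)$, it flips an even number of the signs appearing in $\prod_{e\in E(C)}\sigma(e)$, leaving this product unchanged. Hence $C$ is negative under $\sigma$ if and only if it is negative under $\sigma'$, and the two signatures have the same set of negative circuits.

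For the converse, assume $(G,\sigma)$ and $(G,\sigma')$ have the same set of negative circuits. I would introduce the product signature $\tau$ defined by $\tau(e)=\sigma(e)\sigma'(e)$, so that $\tau(e)=-$ precisely on the edges where $\sigma$ and $\sigma'$ disagree. For every circuit $C$ the sign of $C$ under $\tau$ is the product of its sign under $\sigma$ and its sign under $\sigma'$; since these coincide by hypothesis, $C$ is positive under $\tau$. Thus $(G,\tau)$ is balanced. Applying Theorem \ref{Thm: character_balanced} to $(G,\tau)$ yields a partition of $V(G)$ into sets $U_1$ and $U_2$ such that the negative edges of $\tau$ are exactly those of the cut $\partial_G(U_1)$. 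Setting $X=U_1$ and switching $(G,\sigma)$ at $X$ reverses the sign of precisely the edges in $\partial_G(X)$, which are exactly the edges on which $\sigma$ and $\sigma'$ differ; therefore this switching transforms $\sigma$ into $\sigma'$, and the two signed graphs are equivalent.

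The one place that requires care, and which I expect to be the main obstacle, is the even-intersection argument in the forward direction: one must justify that a circuit crosses any vertex cut an even number of times. I would make this rigorous by fixing an orientation of the closed walk along $C$ and counting the transitions between $X$ and $V(G)\setminus X$, observing that they alternate in direction while the walk is closed. Everything else is bookkeeping: the converse is essentially a translation of the hypothesis into balance of $(G,\tau)$ followed by an invocation of Harary's theorem, and one need only check that edges lying on no circuit (bridges) cause no trouble, which they do not, since they impose no constraint on circuit signs and are freely accommodated by the partition guaranteed by Theorem \ref{Thm: character_balanced}.
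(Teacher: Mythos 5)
Your proof is correct. The paper itself offers no proof of this statement --- it is quoted from the literature (Zaslavsky's result, cited as \cite{SignedGraphs}) --- so there is nothing internal to compare against; your argument is the standard one: the forward direction via the even intersection of a circuit with the cut $\partial_G(X)$, and the converse via the product signature $\tau=\sigma\sigma'$, whose balance follows from the hypothesis and which Theorem \ref{Thm: character_balanced} then converts into the switching set $X=U_1$. Both halves, including your remark that bridges impose no constraint and are absorbed by the partition, are sound and complete.
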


The \emph{frustration index} of $(G,\sigma)$, denoted by $l(G,\sigma)$,
is the minimum cardinality of a set $E \subseteq E(G)$ such that
$(G-E, \sigma \vert_{E(G)-E})$ is balanced. The following result is well known, see e.g. Lemma 1.1 in \cite{cappello2022frustration}.

\begin{lem} \label{Lem: Min_signature}
Let $(G,\sigma)$ be a signed graph with $l(G,\sigma)=k$. 
If $E$ is a set of $k$ edges such that $(G-E, \sigma\vert_{E(G)-E})$ is balanced, 
then there is an equivalent signature $\gamma$ of $G$ such that $E_{\gamma}^- = E$.
\end{lem}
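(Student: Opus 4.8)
The plan is to exploit the stated equivalence ``balanced $\iff$ switching-equivalent to the all-positive signature,'' and then to pin down the negative edges of the resulting signature by a minimality argument. First I would use the hypothesis that $(G-E,\sigma|_{E(G)-E})$ is balanced. Since removing edges does not remove vertices, $V(G-E)=V(G)$, and by Theorem~\ref{Thm: character_balanced} this graph is equivalent to $(G-E,\texttt{\bf +})$: there is a switching set $X\subseteq V(G)$ such that switching $(G-E,\sigma|_{E(G)-E})$ at $X$ makes every one of its edges positive.

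Next I would apply the \emph{same} switching at $X$ to the whole signed graph $(G,\sigma)$, producing a signature $\gamma$ that is equivalent to $\sigma$. Switching at $X$ only reverses the edges of the cut $\partial_G(X)$, so its effect on an edge $e\in E(G)\setminus E$ is identical whether we view $e$ inside $(G,\sigma)$ or inside $(G-E,\sigma|_{E(G)-E})$. Hence every edge of $E(G)\setminus E$ is positive under $\gamma$, which gives the inclusion $E_\gamma^-\subseteq E$ and in particular $|E_\gamma^-|\le k$.

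It remains to upgrade this inclusion to equality, which is where the minimality built into the frustration index must be used carefully. I would first record that $l$ is invariant under switching: if $\gamma$ arises from $\sigma$ by switching at $X$, then for any $F\subseteq E(G)$ the restriction $\gamma|_{E(G)-F}$ is exactly the switching of $\sigma|_{E(G)-F}$ at $X$, because $V(G-F)=V(G)$ and the cut $\partial_{G-F}(X)$ is $\partial_G(X)$ with the edges of $F$ deleted. Consequently $(G-F,\sigma|)$ is balanced if and only if $(G-F,\gamma|)$ is, so $l(G,\gamma)=l(G,\sigma)=k$. Now deleting from $(G,\gamma)$ exactly the edges of $E_\gamma^-$ leaves an all-positive, hence balanced, signed graph; by minimality $|E_\gamma^-|\ge l(G,\gamma)=k$. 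Combined with $E_\gamma^-\subseteq E$ and $|E|=k$, this forces $E_\gamma^-=E$.

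The genuinely delicate step is the switching-invariance of $l$ together with the compatibility of ``switch-then-restrict'' versus ``restrict-then-switch''; everything else is the straightforward inclusion $E_\gamma^-\subseteq E$ and the counting $|E_\gamma^-|=|E|=k$. I expect the write-up to be short once the invariance observation is made explicit, since the balance characterization does all the structural work.
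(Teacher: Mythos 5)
Your proof is correct and complete: switching the whole of $(G,\sigma)$ at the set $X$ that makes $(G-E,\sigma\vert_{E(G)-E})$ all-positive yields $E_\gamma^-\subseteq E$, and the switching-invariance of the frustration index gives $\vert E_\gamma^-\vert \ge l(G,\gamma)=k$, forcing $E_\gamma^-=E$. Note that the paper itself gives no proof of this lemma --- it cites it as well known (Lemma 1.1 of \cite{cappello2022frustration}) --- and your argument is exactly the standard one behind that reference, so there is nothing to add beyond perhaps stating explicitly that switching preserves the sign of every circuit, which is the reason restriction and switching commute in your invariance step.
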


\section{Information dissemination on signed graphs} \label{Sec: (G,sigma) special cases}

We start with giving an informal description of the information dissemination algorithm.  

Let $(G,\sigma)$ be a signed graph. At step $0$ all vertices are uninformed. 

In step $i > 0$ first give information $A$ to an uninformed vertex $v_i$ and then, 
every informed vertex passes his information to its uninformed neighbors, where a negative edge
reverses the sign of the information. It could be that a vertex $z$ receives contradictory information from its neighbors (some send information $A$ and others send $-A$). Then $z$ is considered to be confused (it will get the status $C$ as being confused) and it does not send out any information 
in the following steps (which is equivalent to removing all edges between
$z$ and its uninformed neighbors). 

Eventually, every vertex of $(G,\sigma)$ has either information $A$  or $-A$ or it is confused. 
We want to minimize the number of confused vertices. 
The {\em Information Dissemination Problem} ($IDP$ for short) is the problem to get all vertices of
a signed network informed by such a process with minimum number of
confused vertices. Next we will give a formal definition of the problem. 

We start with defining an algorithm for information dissemination on signed graphs,
$ID$ for short. 

\begin{defi} [Algorithm for Information Dissemination ($ID$) on a Signed Graph $(G,\sigma)$]

Let $(G,\sigma)$ be a signed graph, with fixed signature $\sigma$. Define
step-wise functions 
$\gamma_i \colon V(G) \rightarrow \{A, -A, C, 0\}$ and $p$-vertices $v_i$.

$i=0$: Set $\gamma_0(v) = 0$ for every $v \in V(G)$ and $V_0[0] = V(G)$.

$i \geq 1$: Choose $v_i \in V_{i-1}[0]$. 
Let $\gamma'_{i-1}(v) =A$ if $v=v_{i}$, and $\gamma'_{i-1}(v) = \gamma_{i-1}(v)$ otherwise.

For $X \in \{A, -A, C, 0\}$, let $V'_{i-1}[X] = \{v \colon \gamma'_{i-1}(v) = X\}$ and for $v \in V(G)$, define \\[.2cm]

 $   \gamma_i(v) =  
 \begin{cases}
%        A & \text{ if }  v=v_i \\
	\gamma'_{i-1}(v) & \text{ if } v \in V'_{i-1}[A] \cup V'_{i-1}[-A] \cup V'_{i-1}[C],\\
	\sigma(vz)\gamma'_{i-1}(z) & \text{ if } v \in V'_{i-1}[0], z \in N_G(v) \cap (V'_{i-1}[A] \cup V'_{i-1}[-A]) \text{ and for all} \\
	                                               & ~y \in N_G(v) \cap (V'_{i-1}[A] \cup V'_{i-1}[-A]): \sigma(vy) \gamma'_{i-1}(y) = \sigma(vz)\gamma'_{i-1}(z),\\
	C & \text{ if } v \in V'_{i-1}[0] \text{ and there are } z_1, z_2 \in N_G(v) \cap (V'_{i-1}[A] \cup V'_{i-1}[-A]) \\
	                                               & \text{ with }
	\sigma(vz_1) \gamma'_{i-1}(z_1) \not = \sigma(vz_2)\gamma'_{i-1}(z_2),\\
	0 & \text{ otherwise.}
\end{cases} $ \\

Set $V_i[0] = \{v \colon \gamma_i(v) = 0 \}$. \\[.2cm]
Repeat the process if $V_i[0] \not = \emptyset$.
\end{defi}

In each repetition of the process, the cardinality of the set of vertices with value $0$ is reduced by at least 1. Thus, the process terminates after a finite number of repetitions, say $t$, when 
$\gamma_t(v) \not = 0$ for each $v \in V(G)$. 
For $i \in \{1, \dots, t\}$ and $X \in \{A,-A,C,0\}$,
let $V_i[X] = \{v \colon \gamma_i(v) = X\}$.

We say that vertex $v \in V(G) \setminus \{v_1, \dots,v_t\}$ receives 
information from its neighbors in step $i \in \{1, \dots,t\}$, 
if $0 = \gamma_{i-1}(v) \not = \gamma_i(v)$. 
The vertices of $V_t[C]$ 
are those which received contradictory information in the process,
we call them the $c${\em -vertices}. The vertices $v_1, \dots, v_t$ where we place information $A$ (one in each step) are called the placement vertices, $p${\em -vertices} for short. 

The function $\gamma_t$ is a {\em solution of the $ID$} on $(G,\sigma)$ 
with $p$-vertices $v_1, \dots, v_t$ and value $C_{(G,\sigma)}(\gamma_t) = \vert V_t[C]\vert$. Informally, 
the solution $\gamma_t$ of the $ID$ represents an information placement strategy on $(G,\sigma)$ and 
$C_{(G,\sigma)}(\gamma_t)$ is the number of $c$-vertices in the signed graph. 
Clearly, different choices of $p$-vertices may lead to different solutions and different sets of $c$-vertices. 
We want to minimize the number of $c$-vertices. For a signed graph $(G,\sigma)$ we define: 
$$C(G,\sigma) = \min \{C_{(G,\sigma)}(\gamma) \colon \gamma 
\text{ is a solution of the } ID \text{ on } (G,\sigma)\}.$$

$C(G,\sigma)$ is called the {\em confusion number} of $(G,\sigma)$.  
The {\em Information Dissemination Problem for a signed graph} $(G,\sigma)$ ($IDP$ for $(G,\sigma)$, for short) is to find a solution $\gamma$ of the $ID$ on $(G,\sigma)$ with $C_{(G,\sigma)}(\gamma) = C(G,\sigma)$. 
Note that we do not minimize the number of $p$-vertices.

\subsection*{The $IDP$ for some classes of signed graphs}

We start with proving an upper bound for the confusion number of balanced and of antibalanced signed graphs which both play an exceptional role in the theory of signed graphs. 

Let $n = 2k \geq 6$ be an even number and $(G_n,\sigma_n)$ be the signed graph which is
obtained by two copies $H, H'$ of an all-positive 
complete graph on $k$ with vertex sets $V(H) = \{u_1, \dots , u_k\}$, $V(H') = \{u_1', \dots , u_k'\}$
and negative edges $e_i = u_iu_i'$ for $i \in \{1, \dots, k\}$. 
Note that $G_n$ is a $k$-regular graph of order $n$.

\begin{prop}\label{prop_G_n}
For every even $n \geq 6$, $(G_n, \sigma_n)$ is balanced and 
$(G_n, -\sigma_n)$ is antibalanced. Furthermore,
$C(G_n,\sigma_n) = C(G_n,\texttt{\bf $-$}) = \frac{n}{2}-2$, and
$C(G_n,-\sigma_n) = C(G_n,\texttt{\bf +}) = 0.$
\end{prop}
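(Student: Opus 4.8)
The plan is to dispose of the structural claim first and then compute all four confusion numbers by a single, uniform analysis of the process; throughout write $k=\tfrac n2\ge 3$. For the balance statements I would invoke Theorem~\ref{Thm: character_balanced} with the partition $U_1=V(H)$, $U_2=V(H')$: in $(G_n,\sigma_n)$ the edges inside $H$ and inside $H'$ are positive, while the cut $\partial_{G_n}(V(H))=\{e_1,\dots,e_k\}$ consists exactly of the negative matching edges, so $(G_n,\sigma_n)$ is balanced. For antibalance I would note that switching $(G_n,-\sigma_n)$ at $V(H)$ flips precisely the matching edges, which are positive in $-\sigma_n$, turning every edge negative; hence $(G_n,-\sigma_n)$ is equivalent to $(G_n,\mathbf{-})$ and so antibalanced.

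For the confusion numbers I would exploit that in \emph{each} of the four signatures both $H$ and $H'$ are monochromatic: let $\sigma_H,\sigma_{H'}\in\{+,-\}$ be the common sign inside $H$, $H'$ and $\sigma_e$ the common sign of the matching, and observe $\sigma_H=\sigma_{H'}$ in all four cases. The central observation is that each clique has diameter $1$, so one round of propagation from a seed placed inside a clique informs the \emph{whole} clique. Concretely, if the first seed $v_1$ lies in $H$ — which, by the $H\leftrightarrow H'$ symmetry present in every signature considered, we may assume — then after step~$1$ all of $H$ is informed ($v_1$ carrying $A$ and each remaining $u_j$ carrying $\sigma_H A$) together with the matched vertex $u_1'=\sigma_e A$; no vertex is confused here because each $u_j$ with $j\ge 2$ has only the single informed neighbour $v_1$ at that instant. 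Consequently $V_1[0]=V(H')\setminus\{u_1'\}$, so the step-$2$ seed is forced to lie in $H'$.

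I would then compare, at step~$2$, the value the remaining vertices of $H'$ receive through the matching (and through $u_1'$), namely $w=\sigma_e\sigma_H A$, with the value the forced seed $v_2=A$ pushes along an $H'$-edge, namely $\sigma_{H'}A$. These agree iff $\sigma_H\sigma_{H'}\sigma_e=+$. When they agree — the cases $(G_n,\mathbf{+})$ and $(G_n,-\sigma_n)$ — every remaining vertex of $H'$ receives $w$ consistently, the process ends with nobody confused, and $C=0$. When they disagree — the cases $(G_n,\mathbf{-})$ and $(G_n,\sigma_n)$ — each $u_\ell'$ with $\ell\ge 3$ receives $w$ from $u_\ell$ and from $u_1'$ but $-w$ from $v_2$, so it is confused; only $u_1'$ and the seed $v_2$ survive in $H'$, leaving exactly $\tfrac n2-2$ confused vertices. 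Since the first seed lies in one of the two cliques and (by symmetry) the whole outcome is forced, every solution of the $ID$ yields the same count, so the minimum equals it; this gives $C=0$ resp.\ $C=\tfrac n2-2$, and the two asserted equalities follow at once.

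The step I expect to demand the most care is the lower bound, i.e.\ excluding cleverer strategies. The crux is that a seed in a complete clique necessarily informs the entire clique in one step, so one can never ``partially'' fill $H$ and must seed into $H'$ at step~$2$; it is exactly this forced seed — always carrying $A$ — that creates (or, depending on the signs, fails to create) the confusion. I would make sure to check that \emph{all} of $u_3',\dots,u_k'$, and not merely some of them, become confused in the bad cases, and to verify the identity $\sigma_H=\sigma_{H'}$ which makes $u_1'$ and the matched neighbour push the common value $w$; a slip in this sign bookkeeping is the one thing that would change the final count.
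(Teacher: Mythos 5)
Your proposal is correct and follows essentially the same route as the paper's proof: use symmetry to fix the first $p$-vertex, observe that step~1 informs all of one clique plus the matched vertex, note the second $p$-vertex is forced into the other clique with information $A$, and decide confusion of the remaining $\frac{n}{2}-2$ vertices by the sign comparison (your condition $\sigma_H\sigma_{H'}\sigma_e=+$ reduces exactly to the paper's two-case split, since $\sigma_H=\sigma_{H'}$ in all four signatures). The only cosmetic differences are your uniform sign-variable bookkeeping and the explicit switching at $V(H)$ for antibalance, where the paper instead deduces equivalence of $(G_n,-\sigma_n)$ and $(G_n,\texttt{\bf -})$ directly from the balance of $(G_n,\sigma_n)$.
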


\begin{proof} Let $n = 2k$. 
The negative edges of $(G_n,\sigma_n)$ form a perfect matching of
$G_n$. By construction and Theorem \ref{Thm: character_balanced}, $(G_n,\sigma_n)$ is balanced and therefore, it
is equivalent to $(G_n,\texttt{\bf +})$, Consequently, $(G_n,-\sigma_n)$
is equivalent to $(G_n,\texttt{\bf -})$ and hence, $(G_n,-\sigma_n)$ is antibalanced. 

Let $\pi \in \{\sigma, -\sigma, \texttt{\bf +},\texttt{\bf -}\}$
be a signature on $G_n$. 
Since for any two vertices $x,y$ there is an automorphism on $(G_n,\pi)$ 
which maps $x$ to $y$, we can choose an arbitrary vertex as $v_1$ (i.e., the first $p$-vertex), say $v_1 = u_1$. 

Then, in the first step, $\gamma(u_1)=A$, 
$\gamma_1(u_i) = \pi(u_1u_i) A$ for all $i \in \{2, \dots, k\}$, 
$\gamma_1(u_1') = \pi(u_1u_1')A$, and $\gamma_1(u_j')=0$ for each $j \in \{2, \dots, k\}$.

We have to choose $v_2$ from $\{u_2', \dots, u_k'\}$, w.l.o.g., say $v_2 = u_2'$ and set $\gamma_2(u_2') = A$.
Then, for every $j \in \{3, \dots, k\}$, vertex $u_j'$ has three informed 
neighbors, namely $u_1',u_2'$, and $u_j$.

If $\pi \in \{\sigma_n, \texttt{\bf -}\}$, then 
$\pi(u_1'u_j')\gamma_1(u_1') \not = \pi(u_2'u_j')\gamma_1(u_2')$
and therefore, $\gamma_2(u_j') = C$, and the process terminates.
Since we do not have any other options to choose $p$-vertices as above, it follows that $C(G_n,\pi) = k-2 = \frac{n}{2}-2$.

If $\pi\in \{-\sigma_n, \texttt{\bf +}\}$, then 
$\pi(u_1'u_j')\gamma_1(u_1') = \pi(u_2'u_j')\gamma_1(u_2')
= \pi(u_ju_j')\gamma_1(u_j)$  
and therefore, $\gamma_2(u_j') \neq C$. Consequently, $C(G_n,\pi) = 0$.  
\end{proof}

\begin{theo} \label{Theo: bound balanced graphs}
 Let $G$ be a connected graph of order $n \geq 4$. If $(G,\sigma)$ is balanced, 
 then $C(G,\sigma) \leq \frac{n}{2}-2$ and the bound is attained for every even $n$.    
\end{theo}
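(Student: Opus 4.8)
The plan is to exhibit, for an arbitrary connected balanced $(G,\sigma)$ with $n\ge 4$, one solution of the $ID$ whose number of $c$-vertices is at most $\frac{n}{2}-2$, and then to read off sharpness from Proposition \ref{prop_G_n}. Since $(G,\sigma)$ is balanced, Theorem \ref{Thm: character_balanced} supplies a partition $V(G)=U_1\cup U_2$ with all edges inside $U_1$ and inside $U_2$ positive and all edges of $\partial_G(U_1)$ negative; relabelling if necessary I assume $|U_1|\ge |U_2|$, so $|U_2|\le \frac{n}{2}$. This partition encodes the \emph{natural value} of each vertex: if information $A$ is placed at a single vertex of $U_1$ and propagated, then along any path the accumulated sign is the product of its edge-signs, and crossings of $\partial_G(U_1)$ are exactly the negative edges, so every vertex of $U_1$ should receive $A$ and every vertex of $U_2$ should receive $-A$, independently of the path chosen. (This path-independence is exactly where balancedness enters.)

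The strategy has two phases. In Phase~1, as long as an uninformed vertex of $U_1$ remains, I always choose the next $p$-vertex inside $U_1$, starting with an $r\in U_1$ that has a neighbour in $U_2$; such an $r$ exists by connectedness whenever $U_2\ne\emptyset$. The key invariant, to be proved by induction on the step index, is that throughout Phase~1 no vertex becomes confused and every informed vertex holds its natural value: a $p$-vertex in $U_1$ is set to $A$, which is its natural value, while every other newly informed vertex hears only natural values from its informed neighbours, all present information being natural. In particular, placing $r$ already informs a neighbour $w_0\in U_2$ with its natural value $-A$ in the first step, and when Phase~1 ends all of $U_1$ is informed and hence permanently safe (an informed vertex keeps its value forever under the first case of the definition of $\gamma_i$). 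In Phase~2 I place the remaining $p$-vertices among the still-uninformed vertices of $U_2$; since $U_1$ is already completely informed, any confusion created can occur only among vertices of $U_2$, so the set of $c$-vertices is contained in $U_2$.

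It then suffices to exhibit two non-confused vertices of $U_2$. The vertex $w_0$ is informed in the first step and therefore never confused, and the first $p$-vertex placed in Phase~2 is, like every $p$-vertex, never confused; these two vertices of $U_2$ are distinct, and if Phase~2 is empty then no vertex is confused at all. Hence $C(G,\sigma)\le |U_2|-2\le \frac{n}{2}-2$, while the degenerate cases $U_2=\emptyset$ or $|U_2|\le 1$ give $C(G,\sigma)=0\le \frac{n}{2}-2$ because $n\ge 4$. For the sharpness, Proposition \ref{prop_G_n} provides a balanced signed graph $(G_n,\sigma_n)$ with $C(G_n,\sigma_n)=\frac{n}{2}-2$ for every even $n\ge 6$, and for $n=4$ the value $0$ is attained, for instance by $(K_4,\texttt{\bf +})$, so the bound is attained for every even $n$.

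The main obstacle I expect is the Phase~1 invariant. One must argue carefully that the \emph{forced} placement of a fresh $A$ in every step never collides with the ongoing one-layer propagation, i.e.\ that a vertex of $U_1$ selected as a $p$-vertex always agrees with its natural value and that interleaving placements with propagation keeps all information natural; the subtlety is precisely that placing $A$ at a vertex whose natural value is $-A$ is what creates confusion, so one has to confine all placements to the $A$-class until that class is exhausted. The accompanying bookkeeping of \emph{who is informed when} — needed to certify that $w_0$ and the first Phase-2 seed are genuinely informed, respectively still uninformed, at the required moments — is the delicate part; the rest is counting.
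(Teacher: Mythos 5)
Your proof is correct and takes essentially the same route as the paper's: the Harary bipartition $V(G)=U_1\cup U_2$ with $|U_1|\ge|U_2|$ from Theorem \ref{Thm: character_balanced}, a first $p$-vertex in $U_1$ chosen with a neighbour across the cut, exhausting $U_1$ with placements before touching $U_2$, the balancedness invariant that all information stays ``natural'' (hence confusion-free) during that phase, and sharpness via Proposition \ref{prop_G_n} together with a small balanced example for $n=4$. Your endgame bookkeeping is in fact slightly more robust than the paper's: where the paper splits on $|V_k[-A]|>1$ versus $|V_k[-A]|\le 1$ and in the latter case asserts $U_2=\{u\}$ (an assertion that can fail --- e.g.\ on a balanced $C_4$ the fourth vertex can still be uninformed when $U_1$ is exhausted, although the conclusion $C(G,\sigma)=0$ survives), your two designated never-confused vertices of $U_2$, namely the step-one-informed $w_0$ and the first Phase-2 seed, cover all cases uniformly.
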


\begin{proof}
    By Theorem \ref{Thm: character_balanced}, $V(G)$ can be divided into two sets $U_1$ and $U_2$, such that edges between $U_1$ and $U_2$ are all negative and 
    all other edges of $G$ are positive. W.l.o.g., let $n_i = |U_i|$ and assume that $n_1 \geq n_2$. Thus, $n_2 - 2 \leq \frac{n}{2}-2$.

    If $n_2 = 0$, i.e., $\sigma$ is all positive, then $C(G,\sigma) = 0$ and we are done. 

    Let $n_2 > 0$ and $v$ be a vertex of $U_1$ which has a neighbor (say $u$) in $U_2$. 
    Let $v_1=v$ be the first $p$-vertex.  
    Then choose step-wise $p$-vertices $v_i$ as long as $V_{i-1}[0]\cap U_1 \neq \emptyset$ for $i\geq 2$. 
    Let $v_k$ be the last $p$-vertex which could be chosen from $U_1$. Thus, 
    $U_1\subseteq V_k[A]$ and therefore, $\vert V_k[A] \vert \geq n_1$. 

    If $\vert V_k[-A] \vert > 1$, then 
    $\vert V_k[A] \cup V_k[-A] \vert \geq n_1 +2$ and therefore,
    $C(G,\sigma) \leq n_2 - 2 \leq \frac{n}{2}-2$.

    If $\vert V_k[-A] \vert \leq 1$, then $U_2=\{u\}$ and consequently $C(G,\sigma) = 0$.
    
    By Proposition \ref{prop_G_n} the bound is attained for any even $n\geq 6$. For $n=4$, 
    the bound is attained by a balanced circuit of length $4$.  
\end{proof}

Next we consider trees (which are always balanced) and circuits. 

\begin{prop} \label{Prop: tree, circuit, vertex}
    Let $(G,\sigma)$ be a connected signed graph and $k \geq 3$ be an integer.
    \begin{enumerate}
        \item If $G$ is a tree, then $C(G,\sigma) = 0$.
        \item If $k \not = 5$ or $\sigma \not = \texttt{\bf -}$, then $C(C_k, \sigma) = 0$, and $C(C_5,\texttt{\bf -})=1$.
    \end{enumerate}
\end{prop}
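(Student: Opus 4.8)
The plan is to prove both parts from a single observation: a vertex can become confused only if, at the step it is informed, at least two of its neighbours are already informed and disagree. So if I spread information while keeping the informed set connected, then either acyclicity (for trees) or the interval structure of an arc (on a cycle) forces each newly informed vertex to have exactly one informed neighbour, and no confusion can arise.

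For part 1, I would place $v_1$ arbitrarily and then, at every step $i\geq 2$, choose the forced $p$-vertex $v_i$ to be an uninformed vertex adjacent to the current informed set, keeping the invariant that the informed vertices induce a connected subtree. Since $G$ is a tree, any uninformed vertex adjacent to this connected subtree is joined to it by a unique edge; hence whenever a vertex is informed by propagation it has a single informed neighbour and cannot receive contradictory signs. The forced $p$-vertices are never confused by definition, and their possibly ``inconsistent'' value $A$ is harmless because it is only passed on to vertices that still have a unique informed neighbour. As $G$ is connected the arc grows until it exhausts $V(G)$, so every such run has no $c$-vertex and $C(G,\sigma)=0$.

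For part 2 the same idea yields the upper bound $C(C_k,\sigma)=0$ in all non-exceptional cases. I would grow the informed set as a single connected arc of the cycle; its complement is again an arc, the two frontier vertices each have exactly one informed neighbour, and so propagation never confuses anyone. The only delicate moment is the closure of the loop. If the uninformed arc shrinks to a single vertex, I take that vertex as the last forced $p$-vertex (never confused); if the two advancing fronts meet across an edge, both its endpoints are informed simultaneously, each from its outer neighbour, so again nobody is confused. A confusion can be forced only if the uninformed arc must pass through exactly two adjacent vertices and turning one into a $p$-vertex exposes the other to two informed neighbours carrying opposite signs. I would argue this bad closure can always be avoided --- by choosing the first $p$-vertex and the direction of growth, and if necessary seeding one extra $p$-vertex at a vertex whose sign matches a fixed minimum-frustration colouring (so that, using $l(C_k,\sigma)\in\{0,1\}$, every closure lands either on an edge or on a $p$-vertex). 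I expect this routing argument, together with the verification that the unique signed cycle where it cannot be carried out is $(C_5,\texttt{\bf -})$, to be the main technical work: the obstruction there is that every possible ``meeting edge'' is negative, so no closure is consistent.

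Finally, for the value $C(C_5,\texttt{\bf -})=1$ the matching lower bound is short, and it is where I would be most careful. In any run the first $p$-vertex $v_1$ already informs both of its neighbours in step $1$ (all edges negative, so they receive $-A$), leaving exactly two uninformed vertices $z_1,z_2$ which are adjacent, with $z_1,z_2$ each adjacent to an informed vertex. At step $2$ we are forced to turn one of them, say $z_1$, into a $p$-vertex; then $z_2$ receives $\sigma(z_1z_2)A=-A$ from $z_1$ and $\sigma(z_2 n)(-A)=A$ from its other, informed neighbour $n$, and is confused. By symmetry the same occurs if we start with $z_2$, so every run confuses at least one vertex; with an explicit run confusing exactly one this gives $C(C_5,\texttt{\bf -})=1$. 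The hardest part overall is thus not these extremal computations but controlling the per-step forced $p$-vertices in the upper bound, since an ill-placed $p$-vertex imposes the value $A$ regardless of consistency and can trigger a collision elsewhere on the cycle.
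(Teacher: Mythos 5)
Your part 1 and your evaluation of $C(C_5,\texttt{\bf -})=1$ are correct and essentially the paper's own arguments (grow a connected informed set, acyclicity prevents two informed neighbours; on all-negative $C_5$ the forced second placement confuses the last vertex). The growth phase on cycles is also analysed correctly: while the uninformed set is an arc of length at least $3$, placing the next $p$-vertex adjacent to one frontier informs every new vertex through a unique informed neighbour. The genuine gap is the closure for cycles with $k \equiv 2 \pmod 3$ --- which is exactly the hard case. Single-arc growth adds $3$ vertices per step ($p$-vertex plus one propagated vertex at each end), so when $k\equiv 2 \pmod 3$ the uninformed arc inevitably reaches size $2$; since the algorithm forces a placement in every step, one of those two vertices must become the $p$-vertex, exposing the other to two informed neighbours. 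You name this obstruction but do not resolve it. Choosing the first $p$-vertex or the direction of growth cannot change the residue, and ``seeding one extra $p$-vertex at a vertex whose sign matches a fixed minimum-frustration colouring'' is not available in the $ID$: every $p$-vertex is forced to carry the value $A$, so its value cannot be matched to any colouring (that freedom exists only in the relaxed $rID$ of Section 3). Likewise, your scenario in which ``the two advancing fronts meet across an edge'' cannot arise in your strategy: if only two uninformed vertices remain, one of them must be that step's $p$-vertex, so they are never both informed purely by propagation.

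What is actually needed --- and what the paper does --- is a sign-independent parity repair: for $k\equiv 2\pmod 3$ and $k\neq 5$, make one placement at distance three ahead of a frontier (the paper takes $v_1=u_1$, $v_2=u_5$). In that step the two gap vertices and the vertices beyond the new $p$-vertex each still have a unique informed neighbour, so no confusion can occur regardless of $\sigma$, the arc grows by $5$, and the residue of the uninformed arc drops to $0 \pmod 3$, after which your clean endings apply. This repair needs $k\geq 8$, which is precisely why $k=5$ is exceptional; only there is a sign analysis required: any $\sigma\neq\texttt{\bf -}$ on $C_5$ contains a positive path of length $3$, say $u_1u_2u_3u_4$, and taking $v_1=u_1$, $v_2=u_4$ gives a run with no $c$-vertex --- a verification your sketch also defers. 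As written, your proposal leaves the upper bound $C(C_k,\sigma)=0$ unproven for every $k\equiv 2\pmod 3$, including the non-all-negative signatures on $C_5$.
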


\begin{proof}
    1. Let $G$ be a tree. Take an arbitrary vertex as the first $p$-vertex $v_1$.
    For each step $i\geq 2$, take a vertex from $V_{i-1}[0]\cap N_G(V_{i-1}[A]\cup V_{i-1}[-A])$ as $v_i$. 
    
    We will show by induction on $i$ that for each step $i\geq 1$, $V_{i}[A]\cup V_{i}[-A]$ induces a connected graph and $V_i[C]=\emptyset$, which completes the proof. It is trivial for $i=1$. For the induction step, by the induction hypothesis and the choice of $v_i$, $\{v_i\} \cup V_{i-1}[A]\cup V_{i-1}[-A]$ induces a connected graph. Since $G$ has no circuits, there is no vertex which has two neighbors in $\{v_i\} \cup V_{i-1}[A]\cup V_{i-1}[-A]$. It follows that $V_i[C]=\emptyset$, since $V_{i-1}[C]=\emptyset$ by the induction hypothesis.

    2. Let $G=[u_1\ldots u_k]$ be a circuit of length $k$.
    First assume that $(G,\sigma)=(C_5,\texttt{\bf -})$. Take $u_1$ as the first $p$-vertex and $u_3$ as the second one. After two steps, the $ID$ process terminates with one $c$-vertex, which is $u_4$. By the symmetry of $(C_5,\texttt{\bf -})$, we do not have any other options for the choice of $p$-vertices as above. Therefore, $C(C_5,\texttt{\bf -})=1$.

    Next assume that $(G,\sigma)\neq (C_5,\texttt{\bf -})$. 
    Let $k-r\equiv t \text{ (mod $3$)}$ with $0\leq r \leq 2$. If $r=0$, then take  $p$-vertices $v_i=u_{2i-1}$ for $i\in \{1,\ldots,t\}$. 
    The $ID$ process terminates after $t$ steps with no $c$-vertices.
    If $r=1$, then $u_{2t+1}$ is the only uninformed vertex after $t$ steps by taking the same $p$-vertices as above. Then take $v_{t+1}=u_{2t+1}$ and the process terminates with no $c$-vertices. It remains to assume that $r=2$. 
    If $k\neq 5$, then take $v_1=u_1$, $v_2=u_5$, and $v_i=u_{2i+1}$ for $i\in \{3,\ldots,t\}$.
    The process terminates after $t$ steps with no $c$-vertices.
    For $k=5$, since $(G,\sigma)\neq (C_5,\texttt{\bf -})$, $G$ must contain a positive path of length 3, w.l.o.g, say $u_1u_2u_3u_4$. Take $p$-vertices $v_1=u_1$ and $v_2=u_4$. The process terminates after two steps with no $c$-vertices.
\end{proof} 

\begin{prop}\label{prop_H_n}
Let $G$ be a connected graph of order $n \geq 5$ and maximum degree $\Delta \geq 3$. 
For any signed graph $(G,\sigma)$:    
\begin{enumerate}
    \item If $\Delta \geq n - 2$, then $C(G,\sigma) = 0$.
    \item If $\Delta < n - 2$,  then $C(G,\sigma) \leq n - 2 - \Delta$ and the bound is attained for 
    any $\Delta$.
    \item $C(G,\sigma)\leq (1-\frac{2}{\Delta})n$.
\end{enumerate}
\end{prop}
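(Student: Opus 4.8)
The plan is to use, for all three parts, the opening move of Proposition~\ref{prop_G_n}: take a vertex $v$ of maximum degree $\Delta$ as the first $p$-vertex $v_1$. In step~$1$ the only informed vertex is $v_1$, so every neighbour of $v_1$ receives a signal from a single source; no vertex can enter state $C$, and the whole set $\{v_1\}\cup N_G(v_1)$, of size $\Delta+1$, becomes informed. Since informed vertices never lose their status, this already settles the two upper bounds. For part~(1), $\Delta\ge n-2$ leaves at most one uninformed vertex $w$; taking $w$ (if present) as $v_2$ turns it into an $A$-vertex while no $0$-vertex with conflicting neighbours remains, so $C(G,\sigma)=0$. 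For the upper bound in part~(2), $\Delta<n-2$ leaves at least two uninformed vertices, and choosing any one as $v_2$ keeps it informed; hence at least $\Delta+2$ vertices are informed and at most $n-2-\Delta$ are confused.

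To see that part~(2) is sharp I would generalise $(G_n,\sigma_n)$: glue two all-positive cliques $K_a$ and $K_b$ with $a\ge b\ge 3$ by a negative perfect matching on $b$ pairs. Then $\Delta=a$, $n=a+b$ and $n-2-\Delta=b-2$, so letting $a=\Delta$ and varying $b$ realises every $\Delta\ge 3$. Placing first inside $K_a$ informs it together with one matched vertex of $K_b$, and then, exactly as in Proposition~\ref{prop_G_n}, each further placement inside $K_b$ forces the remaining $b-2$ vertices to receive a negative (matching) and a positive (in-clique) signal at once. The routine but slightly tedious point is the matching lower bound $C(G,\sigma)\ge b-2$: one has to rule out every placement order, the decisive observation being that the balanced colouring assigns $K_b$ the charge opposite to the forced value $A$, so that at most two of its vertices can ever be informed.

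For part~(3) I would drop the idea of a tailored placement and argue globally by counting. Fix any solution $\gamma_t$, put $I=V_t[A]\cup V_t[-A]$ and note $V(G)=I\cup V_t[C]$. When a vertex enters state $C$ it has two distinct informed neighbours with conflicting signals, and those neighbours stay informed; hence every vertex of $V_t[C]$ has at least two neighbours in $I$, which gives $|\partial_G(V_t[C])|\ge 2|V_t[C]|$. Counting the same crossing edges from the informed side, the handshake identity $\sum_{u\in I}\deg_G(u)=2|E(G[I])|+|\partial_G(V_t[C])|$ together with $\deg_G(u)\le\Delta$ yields $|\partial_G(V_t[C])|\le \Delta|I|-2|E(G[I])|$. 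Combining the two estimates gives $2|V_t[C]|\le \Delta|I|-2|E(G[I])|$, and as soon as $|E(G[I])|\ge |I|$ this simplifies to $2|V_t[C]|\le(\Delta-2)|I|$, i.e. $|I|\ge \tfrac{2}{\Delta}n$ and $C(G,\sigma)\le(1-\tfrac{2}{\Delta})n$.

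The crux is exactly the inequality $|E(G[I])|\ge|I|$: if $G[I]$ is a forest the estimate leaks an additive term proportional to its number of components. I would close this gap by engineering a solution in which the informed region is grown as one connected block containing a circuit, always taking the next $p$-vertex adjacent to the current informed set; then $G[I]$ is connected and not acyclic, so $|E(G[I])|\ge|I|$. The degenerate case in which no circuit can be captured, in particular when $G$ is a tree, is already covered by Proposition~\ref{Prop: tree, circuit, vertex}, which gives $C(G,\sigma)=0$. Showing that such a connected, cycle-containing informed region is always attainable while the counting above stays valid is the delicate step, and is where I expect the genuine difficulty of part~(3) to sit.
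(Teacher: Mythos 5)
Parts (1) and (2) of your proposal are correct and coincide with the paper's own argument: place the first $p$-vertex at a vertex of maximum degree, observe that no confusion can arise in step one, and note that the second $p$-vertex is always informed, which leaves at most one uninformed vertex in part (1) and at most $n-2-\Delta$ candidates for confusion in part (2). For sharpness the paper simply invokes Proposition~\ref{prop_G_n}, which is exactly the case $a=b=\Delta$ of your two-clique construction; by specializing to $a=b$ you could cite that proposition directly and skip the ``routine but slightly tedious'' lower bound that you leave unverified for general $a>b$.

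The genuine gap is in part (3), and you point at it yourself: your counting needs $|E(G[I])|\ge |I|$ for the entire informed set $I$, and the plan to force this by growing one connected informed block containing a circuit does not work in general. Confused vertices never transmit, so the frontier of the informed region can come to consist entirely of $c$-vertices; the next $p$-vertex must then start a fresh component, and a component that is sealed off early may well be a tree. Your fallback via Proposition~\ref{Prop: tree, circuit, vertex} only covers the case where $G$ itself is a tree, not the case where $G$ contains circuits but some informed components end up acyclic. There is also a tension you do not resolve: always placing adjacent to the informed set can itself create confusion, since a $p$-vertex is assigned $A$ irrespective of what its informed neighbours would send it. The paper circumvents all of this with two ingredients missing from your proposal: (i) a prioritized placement rule --- if some uninformed vertex already has conflicting informed neighbours (the set $W''_{i-1}$), place there; otherwise place at a frontier vertex (the set $W'_{i-1}$); otherwise place anywhere --- and (ii) the resulting structural claim that every $c$-vertex has at least two neighbours in a single \emph{non-tree} component of $G[V_t[A]\cup V_t[-A]]$. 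The count is then run only against $H_1$, the union of the non-tree components, via $m_1\ge n_1$, $m^*\ge 2n_2$ and $n_1\Delta\ge 2m_1+m^*$, and it uses only $n_1+n_2\le n$; no connectivity of the informed set and no control of its tree components is ever needed. Without claim (ii), or some substitute for it, your inequality $2|V_t[C]|\le(\Delta-2)|I|$ is unjustified and part (3) remains unproven.
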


\begin{proof}
1. Let $v_1$ be the first $p$-vertex, where $v_1$ is a vertex of maximum degree. If $\Delta = n - 2$, then let $v_2$ be the second $p$-vertex, where $v_2$ is
the vertex of $V(G) \setminus (N_G(v_1) \cup \{v_1\})$.

2. Take a vertex of maximum degree as the first $p$-vertex and choose other $p$-vertices arbitrarily. This results in a solution of $ID$ on $(G,\sigma)$ either in one step with no $c$-vertices or in at least two steps with at most $n - 2 - \Delta$ $c$-vertices.
    
    Proposition \ref{prop_G_n} shows that the upper bound $n - 2 - \Delta$ for $C(G,\sigma)$ is achieved by the signed graph $(G_n,\sigma_n)$ for any given $\Delta \geq 3$.

3. Take an arbitrary vertex as the first $p$-vertex $v_1$.
For each step $i\geq 2$, let $W_{i-1}=\{u\colon u\in V_{i-1}[0] \text{~and~} \exists x\in V_{i-1}[A]\cup V_{i-1}[-A] \text{~with~} ux\in E(G)\}$,
let $W''_{i-1}=\{u\colon u\in V_{i-1}[0] \text{~and~}\exists x,y\in V_{i-1}[A]\cup V_{i-1}[-A] \text{~with~} \sigma(ux)\gamma_{i-1}(x)\neq \sigma(uy)\gamma_{i-1}(y)\}$,
and let $W'_{i-1}=W_{i-1}\setminus W''_{i-1}$.
If $W''_{i-1}\neq \emptyset$, then choose $v_i$ from $W''_{i-1}$. 
If $W_{i-1}''= \emptyset$ and $W'_{i-1}\neq \emptyset$, then choose $v_i$ from $W'_{i-1}$.
If $W_{i-1}=\emptyset$ and $V_{i-1}[0]\neq \emptyset$, then  choose $v_i$ from $V_{i-1}[0]$.
Let the $ID$ process on $(G,\sigma)$ terminates after $t$ steps.
It follows from the choice of $p$-vertices that 
each $c$-vertex has at least two neighbors in some component of $G[V_{t}[A]\cup V_t[-A]]$ which is not a tree.

Let $H_1$ consist of all the components of $G[V_{t}[A]\cup V_t[-A]]$ that is not a tree, and let $H_2 = G[V_{t}[C]]$.
For $i \in \{1,2\}$,
let $n_i = \vert V(H_i)\vert$ and $m_i = \vert E(H_i)\vert$.  Let $m^* = \vert \partial_G(H_1)\vert$. 
Since each component of $H_1$ is not a tree, $m_1\geq n_1$.
Since each  vertex of $H_2$ has at least two neighbors in $H_1$, $m^*\geq 2n_2$. Since the maximum degree of $H_1$ is at most $\Delta$, $n_1 \Delta  \geq 2m_1+m^*$.
We can conclude from these inequalities that $n_1 \Delta \geq 2n_1+2n_2$. Consequently, $n_2\leq (1-\frac{2}{\Delta})n$, since $n_1+n_2\leq n$.
\end{proof}

Let $s,t\geq 3$ be two integers. Let $(K_{t,t},\tau_t)$ be the signed graph 
where $E^-_{\tau_t}$ forms a perfect matching.
Denote by $\mathcal{G}_{s,t}$ the class of signed graphs on $st$ vertices $u_{i,j}$, 
$i \in \{0, \dots,s-1\}$ and $j \in \{0, \dots,t-1\}$, and
$st^2$ edges such that $U_i\cup U_{i+1}$ induces a $(K_{t,t},\tau_t)$ or a $(K_{t,t},-\tau_t)$, where $U_i=\{u_{i,0},u_{i,1},\ldots,u_{i,t-1}\}$ and the addition on index runs modulo $t$.
See Figures \ref{fig:figure-1} and \ref{fig:figure-2} for instance of $(K_{t,t},\tau_t)$ and $\mathcal{G}_{s,t}$, respectively.

\begin{figure}
    \centering
    \includegraphics[width=10cm]{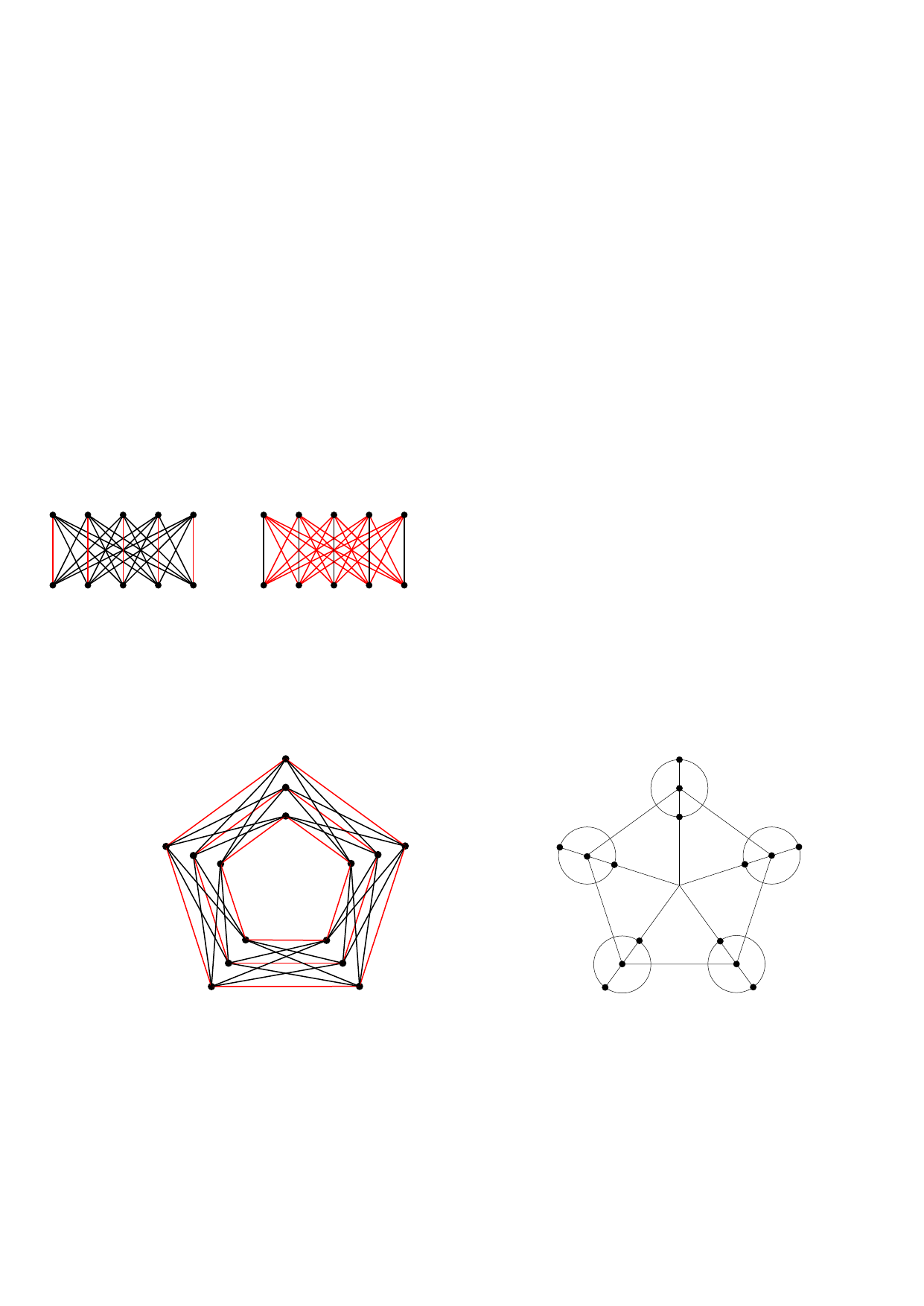}
    \caption{The signed graphs $(K_{5,5},\tau_5)$ (on the left-side) and $(K_{5,5},-\tau_5)$ (on the right-side), in which red lines (resp., black lines) represents negative edges (resp., positive edges).}
    \label{fig:figure-1}
\end{figure}

\begin{figure}
    \centering
    \includegraphics[width=6cm]{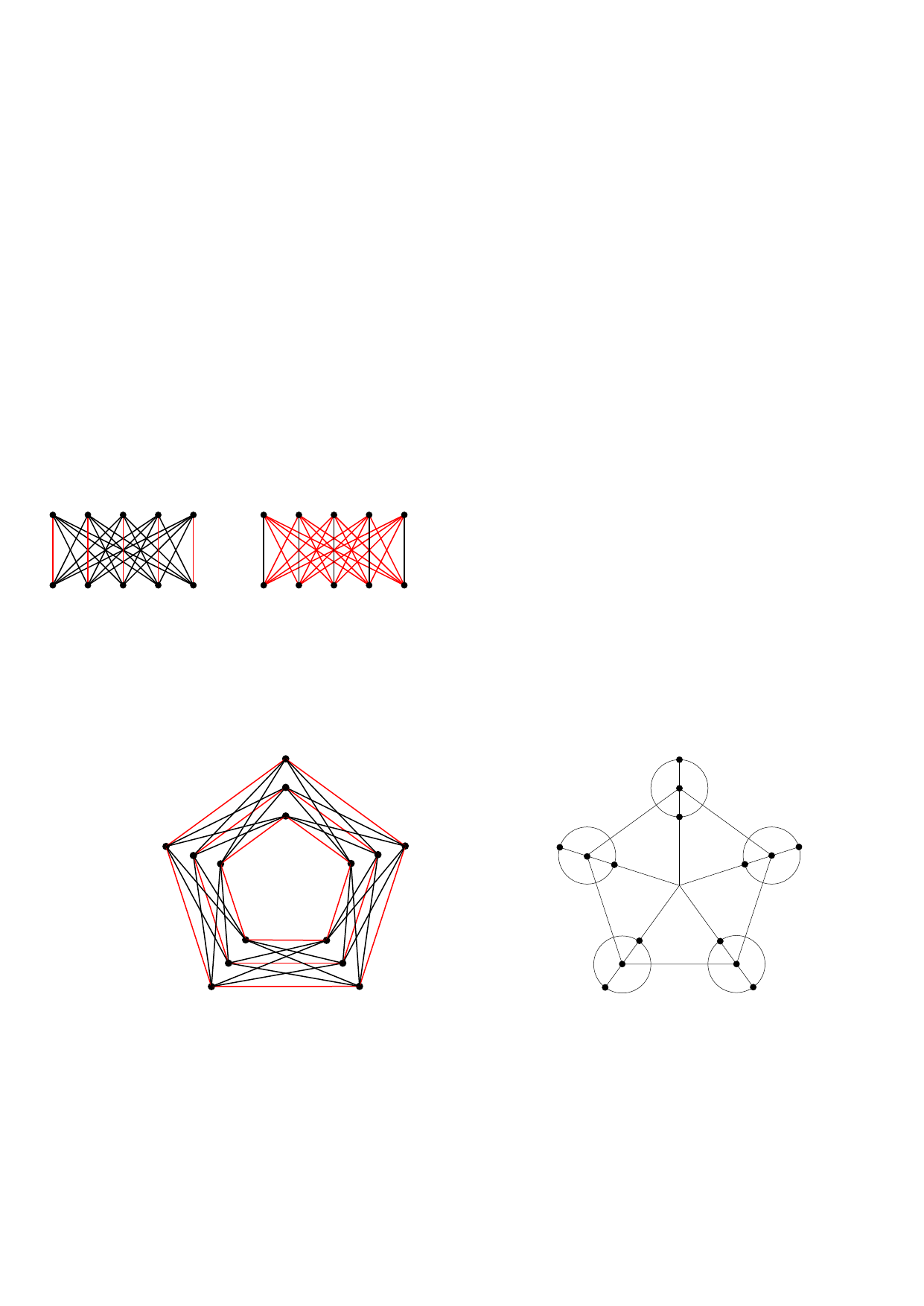}
    \caption{A member of the signed graph family $\mathcal{G}_{5,3}$}
    \label{fig:figure-2}
\end{figure}

\begin{prop}\label{prop_H_n}
Let $(H,\tau)$ be a signed graph of order $n$ and $t\geq 3$ be an integer.
\begin{enumerate}
    \item If $(H,\tau)\in\mathcal{G}_{6,t}$, then $C(H,\tau) =  \frac{n}{2}-4$.
    \item If $(H,\tau)\in\mathcal{G}_{5,t}$, then $C(H,\tau) =  \frac{3n}{5}-4$.
    \item If $(H,\tau)\in\mathcal{G}_{4,t}$, then $C(H,\tau) =  \frac{n}{2}-3$.
    \item If $(H,\tau)=(K_{t,t},\tau_t)$, then $C(H,\tau) = \frac{n}{2}-2$.
\end{enumerate}
\end{prop}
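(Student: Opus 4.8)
The plan is to prove all four identities at once through the unified formula $C(H,\tau)=\lceil s/2\rceil(t-1)-1$ (reading $s=2$ for part (4)), by establishing a matching upper and lower bound for every $(H,\tau)\in\mathcal{G}_{s,t}$. I would first dispose of part (4), the single $(K_{t,t},\tau_t)$, since it is the base case and already displays the whole mechanism. Placing the first $p$-vertex $u_{0,0}$ on one side floods the entire opposite side in one wave, and that side inherits a sign pattern in which exactly one coordinate differs from the remaining $t-1$ (a \emph{one--odd} pattern). Because $t\ge 3$, every non-placed vertex of the starting side then sees two conflicting signals and is confused in the next wave; one further $p$-vertex rescues a single such vertex, leaving $t-2=\tfrac n2-2$ confused. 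This cannot be beaten, because the first $p$-vertex unavoidably floods a whole side, after which every remaining vertex is resolved in one wave and at most one of them can be saved.

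For the upper bound in the cyclic cases I would use a single source. Put $v_1=u_{0,0}$; the first wave informs all of $U_1$ and $U_{s-1}$, each acquiring a one--odd pattern distinguished at coordinate $0$. Propagating these patterns around the cycle, the layers at \emph{odd} distance from $U_0$ become completely informed, whereas a vertex of a layer at \emph{even} distance receives, from each fully informed neighbour, the telltale conflict of a one--odd pattern and is confused unless its coordinate is the distinguished $0$. Thus every even-distance layer keeps exactly one informed vertex and contributes $t-1$ confused vertices, and there are $\lceil s/2\rceil$ such layers (namely $2,3,3$ for $s=4,5,6$). Finally, since several even-distance layers are resolved in the same wave, one well-timed extra $p$-vertex rescues exactly one more vertex, giving $\lceil s/2\rceil(t-1)-1$ confused vertices, i.e.\ $\tfrac n2-3,\ \tfrac{3n}5-4,\ \tfrac n2-4$ for $s=4,5,6$. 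The mechanism of the one--odd pattern is the same one underlying Proposition \ref{prop_G_n}.

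For the lower bound I would argue that no solution does better. The first $p$-vertex has degree $2t$ and floods two whole layers in the first wave; I would isolate the structural lemma that a completely informed layer forces, in any adjacent layer that is resolved within a single wave, all but at most one non-$p$ vertex to be confused (this is precisely the one--odd obstruction and is exactly where $t\ge 3$ enters). Tracking which layers are compelled to be resolved simultaneously, one sees that each of the $\lceil s/2\rceil$ even-distance layers is struck by a fully informed neighbour, so collectively they must lose at least $\lceil s/2\rceil(t-1)$ vertices to confusion; since at most one $p$-vertex is placed per wave and these confusions are concentrated in few waves, the number of rescued vertices is at most one, which yields the matching lower bound.

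The hard part will be twofold. First, the lower bound needs a genuine amortisation argument: one must show that the confusions in the even-distance layers really are forced into few enough waves that no strategy can rescue two of them, rather than merely bounding the survivors layer by layer. Second, and more delicate, the confusion number is \emph{not} a switching invariant (already $C(C_5,\texttt{\bf -})=1$ while an equivalent signature gives $0$, cf.\ Proposition \ref{Prop: tree, circuit, vertex}), so the value has to be re-established for each choice of $\pm\tau_t$ on the $s$ layer-pairs. The crux is the ``meeting'' layer where the two propagation fronts collide: whether its distinguished vertex survives depends on the product of the pair-signs around the cycle, so I expect the argument to normalise the signature (using the freedom in the $\pm\tau_t$ pattern together with the symmetry of $\mathcal{G}_{s,t}$) to a canonical representative before running the count, and to treat the parity of that product as the principal case distinction.
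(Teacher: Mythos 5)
Your part (4) and your lower-bound mechanism coincide with the paper's: after the forced first wave the layers $U_{k\pm1}$ carry the one--odd pattern, every vertex of $U=U'_{k-2}\cup U'_{k}\cup U'_{k+2}$ is therefore resolved \emph{in the second wave} no matter where $v_2$ goes, and at most the single vertex $v_2$ escapes confusion there; in particular the amortisation you worry about is unnecessary, because all the forced confusions land in one and the same wave. The genuine gap is in your upper bound for the cyclic cases, where you really do depart from the paper. The paper never lets the two fronts travel around the cycle: it places $v_2=u_{k+2,1}$ already in step $2$, and this placement does double duty --- it is itself the one rescued vertex of $U$, \emph{and} it is the unique informant of the far layer $U_{k+3}$ (for $s=6$), so the process stops after two waves with a count independent of the signs. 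In your single-source plan the far layer is resolved only in a third wave, where it hears from the two survivors $u_{2,0}$ and $u_{4,0}$. Writing $\epsilon_i\in\{+,-\}$ for the type ($\tau_t$ versus $-\tau_t$) of the pair $(U_i,U_{i+1})$, these survivors hold $\epsilon_0\epsilon_1A$ and $\epsilon_4\epsilon_5A$, and the signals they send to each vertex of $U_3$ disagree precisely when $\prod_{i=0}^{5}\epsilon_i=-$ (e.g.\ exactly one pair of type $-\tau_t$). For such members your strategy confuses all of $U_3$ except the forced third placement, hence at least $4t-5>3t-4$ vertices in total, and no ``well-timed rescue'' repairs this: one placement per wave is compulsory, and only a wave-$2$ placement that informs $U_3$ single-handedly --- the paper's move, which your framing reduces to a mere rescue --- prevents the collision.

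Your proposed repair of the sign issue is also unsound. You cannot ``normalise the signature to a canonical representative'': as you note yourself, $C$ is not a switching invariant, and every isomorphism of these graphs (simultaneous coordinate permutations, rotations and reflections of the layer cycle) preserves the parity of the number of $-\tau_t$ pairs, so $\prod_i\epsilon_i$ is an invariant of the member and the negative case cannot be transformed away; it must be met by changing the placement. For $s=5$ the sign-free choice is $v_2=u_{k,1}$ in the \emph{source} layer: then $U_{k+2}$ hears only from $U_{k+1}$ and $U_{k+3}$ only from $U_{k+4}$, giving $3t-4$ for every sign pattern; note that the paper's own choice $v_2=u_{k+2,1}$ has the same defect as yours, since $u_{k+3,0}$ is confused whenever $\epsilon_{k+2}\epsilon_{k+3}\epsilon_{k+4}=-$, e.g.\ for the all-$(-\tau_t)$ member. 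For $s=4$, however, the parity case you defer is fatal and cannot be cased away: in wave $2$ the meeting vertex $u_{k+2,0}$ hears $\epsilon_k\epsilon_{k+1}A$ from $U_{k+1}$ and $\epsilon_{k+2}\epsilon_{k+3}A$ from $U_{k+3}$ wherever $v_2$ is placed, so when $\prod_{i=0}^{3}\epsilon_i=-$ every strategy produces exactly $2t-2=\frac{n}{2}-2$ confused vertices and the claimed value $\frac{n}{2}-3$ is not attained. So for mixed sign patterns the difficulty you flagged is not a technicality of your write-up but an obstruction to the statement itself (and to the paper's proof of it, which asserts without verification that $v_2=u_{k+2,1}$ leaves only $U\setminus\{v_2\}$ confused); carried out honestly, your parity case distinction ends in a counterexample rather than a proof.
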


\begin{proof}
Let $(H,\tau)\in\mathcal{G}_{s,t}$.
By definition, for any $i$ and any two vertices $x,y \in U_i$, there is an automorphism on $(H,\tau)$ which maps $x$ to $y$.
Hence, we may assume that the first $p$-vertex is $v_1=u_{k0}$, with $k\in\{0,\dots,s-1\}$.  
Then in the first step, each vertex of $U_{k-1}\cup U_{k+1}\cup \{v_1\}$ receives precisely one of information $A$ and $-A$. 
Let $U'_i=U_i\setminus \{u_{i0}\}$ and $U=U'_{k-2}\cup U'_{k}\cup U'_{k+2}$.
Note that all the vertices of $U'_{k-1}$ receive the same information, and so do vertices of $U'_{k+1}$.
Hence, in the second step, no matter how the second $p$-vertex $v_2$ is chosen, all vertices of $U\setminus\{v_2\}$ receive contradictory information from $U'_{k-1}\cup U'_{k+1}$.
Hence, $C(H,\tau)\geq |U|-1.$

On the other hand, for $s\in\{4,5,6\}$, if take $v_2=u_{k+2,1}$, then in the second step, all the vertices of $(H,\tau)$ get informed and only the vertices of $U\setminus\{v_2\}$ become $c$-vertices. 
Therefore, $C(H,\tau)=|U|-1.$ For $s=4$, it gives $C(H,\tau)=2t-3=\frac{n}{2}-3$; for $s=5$, it gives $C(H,\tau)=3t-4=\frac{3n}{5}-4$; and for $s=6$, it gives $C(H,\tau)=3t-4=\frac{n}{2}-4.$ 

Let $(H,\tau)=(K_{t,t},\tau_t)$ with partition sets $A, B$, 
and $E^-_{\tau_t} = \{a_ib_i \colon i \in \{1, \cdots ,t\}, a_i \in A, b_i \in B \}$. 
W.l.o.g., choose $a_1$ as the first $p$-vertex, 
then $\gamma_1(b_1) = -A$ and $\gamma_i(b_i) = A$ if $i \geq 2$.
In the next step, we can only choose one vertex of $B$ as $p$-vertex and all other
vertices become $c$-vertices. Thus, $C(H,\tau) = \frac{n}{2}-2$.
\end{proof}

\section{Relaxed $IDP$ and equivalence classes of signed graphs} \label{Sec: Equivalence Classes}

The proof of Proposition \ref{prop_G_n} shows that
the balanced signed graphs $(G_n, \sigma_n)$ have this huge number of 
$c$-vertices since we are forced to give information $A$ 
second time. If we could give information $-A$, then we would not obtain any $c$-vertex. Similarly, if we would first switch to an all positive signed graph, then we also would not obtain any $c$-vertex. In this section we will
study these two variations of the $IDP$.

We first introduce a relaxed version of the $ID$ algorithm on a signed graph $(G,\sigma)$
($rID$ on $(G,\sigma)$ for short) where we
allow the placement of information $A$ or $-A$ on $p$-vertices. 
Second, we study bounds for the minimum number of confused vertices 
within an equivalence class of signed graphs. That is, for a signed graph
$(G,\sigma)$ we will study the parameter 
$$\min \{C(G,\pi) \colon (G,\sigma) \text{ and } (G,\pi) \text{ are switching equivalent} \}.$$ 

\begin{defi} [Algorithm for the relaxed Information Dissemination ($rID$) on a Signed Graph $(G,\sigma)$] 

Let $(G,\sigma)$ be a signed graph, with fixed signature $\sigma$. Define
step-wise functions 
$\gamma_i \colon V(G) \rightarrow \{A, -A, C, 0\}$ and $p$-vertices $v_i$ as follows:

$i=0$: Set $\gamma_0(v) = 0$ for every $v \in V(G)$ and $V_0[0] = V(G)$.

$i \geq 1$: Choose $v_i \in V_{i-1}[0]$ and $A'\in\{A,-A\}$. 
Let $\gamma'_{i-1}(v) =A'$ if $v=v_{i}$, and $\gamma'_{i-1}(v) = \gamma_{i-1}(v)$ otherwise.

For $X \in \{A, -A, C, 0\}$, let $V'_{i-1}[X] = \{v \colon \gamma'_{i-1}(v) = X\}$ and for $v \in V(G)$, let \\[.2cm]

 $   \gamma_i(v) =  
 \begin{cases}
	\gamma'_{i-1}(v) & \text{ if } v \in V'_{i-1}[A] \cup V'_{i-1}[-A] \cup V'_{i-1}[C],\\
	\sigma(vz)\gamma'_{i-1}(z) & \text{ if } v \in V'_{i-1}[0], ~z \in N_G(v) \cap (V'_{i-1}[A] \cup V'_{i-1}[-A]) \text{ and for all }\\ 
	                                               & ~~~~y \in N_G(v) \cap (V'_{i-1}[A] \cup V'_{i-1}[-A]): \sigma(vy) \gamma'_{i-1}(y) = \sigma(vz)\gamma'_{i-1}(z),\\
	C & \text{ if } v \in V'_{i-1}[0] \text{ and there are } z_1, z_2 \in N_G(v) \cap (V'_{i-1}[A] \cup V'_{i-1}[-A]) \\
	                                               & \text{ with }
	\sigma(vz_1) \gamma'_{i-1}(z_1) \not = \sigma(vz_2)\gamma'_{i-1}(z_2),\\
	0 & \text{ otherwise.}\\
\end{cases} $ \\

Set $V_i[0] = \{v \colon \gamma_i(v) = 0 \}$. \\[.2cm]
Repeat the process if $V_i[0] \not = \emptyset$.
\end{defi}

If the algorithm terminates after $t$ repetitions with function $\gamma_t$, then let 
$V_t[C] = \{v \colon \gamma_t(v) = C \}$.
The function $\gamma_t$ is a {\em solution of the $rID$} on $(G,\sigma)$ 
with $p$-vertices $v_1, \dots, v_t$ and value $C^r_{(G,\sigma)}(\gamma_t) = \vert V_t[C] \vert$. 

Informally, the solution $\gamma_t$ of the $rID$ represents an information placement strategy on $(G,\sigma)$ and 
$C_{(G,\sigma)}^r(\gamma_t)$ is the number of confused entities in the signed network. 
Clearly, different choices of $p$-vertices may lead to different solutions and different numbers of $c$-vertices. 
Again, the number of $c$-vertices should be minimized. For a signed graph $(G,\sigma)$ define 
$$C_r(G,\sigma) = \min \{C_{(G,\sigma)}^r(\gamma) \colon \gamma \text{ is a solution of the } rID \text{ on } (G,\sigma)\}.$$
$C_r(G,\sigma)$ is called the {\em relaxed confusion number} of $(G,\sigma)$.
The {\em relaxed Information Dissemination Problem for a signed graph} $(G,\sigma)$ ($rIDP$ on $(G,\sigma)$ for short) is to find a solution $\gamma$ of the $rID$ on $(G,\sigma)$
with $C_{(G,\sigma)}^r(\gamma) = C_r(G,\sigma)$. 

By definition, any solution of the $IDP$ on a signed graph $(G,\sigma)$ is a solution of the $rIDP$ on $(G,\sigma)$. Therefore, 
$C_r(G,\sigma)\leq C(G,\sigma)$.

\begin{lem} \label{Prop: same C_r} 
Let $(G,\sigma)$ and $(G,\pi)$ be signed graphs on $G$.
    If $(G,\sigma)$ and $(G,\pi)$ are equivalent, then $C_r(G,\sigma) = C_r(G,\pi)$.
\end{lem}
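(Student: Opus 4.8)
The plan is to show that any solution of the $rID$ on $(G,\sigma)$ can be transformed into a solution of the $rID$ on $(G,\pi)$ with exactly the same set of $c$-vertices, and vice versa; by symmetry it suffices to exhibit one such transformation. Since $(G,\sigma)$ and $(G,\pi)$ are equivalent, there is a set $X \subseteq V(G)$ such that $(G,\pi)$ is obtained from $(G,\sigma)$ by switching at $X$, i.e.\ $\pi(e) = -\sigma(e)$ exactly for the edges $e \in \partial_G(X)$. The key idea is that switching at $X$ can be mirrored on the information values: define a sign function $\varepsilon \colon V(G) \to \{+,-\}$ by $\varepsilon(v) = -$ if $v \in X$ and $\varepsilon(v) = +$ otherwise. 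I would like to claim that if $\gamma_t$ is a solution of the $rID$ on $(G,\sigma)$, then the relabeled function obtained by composing the placed information values with $\varepsilon$ yields a valid solution of the $rID$ on $(G,\pi)$.

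Concretely, first I would take a solution of the $rID$ on $(G,\sigma)$ with $p$-vertices $v_1, \dots, v_t$ and placement choices $A'_1, \dots, A'_t \in \{A,-A\}$. I would then run the $rID$ on $(G,\pi)$ using the \emph{same} sequence of $p$-vertices $v_1, \dots, v_t$, but placing information $\varepsilon(v_i)\, A'_i$ at step $i$ instead of $A'_i$. The heart of the argument is an induction on the step index $i$ showing that the two runs stay synchronized in the following sense: for every vertex $v$ and every step $i$, the $(G,\pi)$-run assigns $\gamma_i^\pi(v) = \varepsilon(v)\,\gamma_i^\sigma(v)$ whenever $\gamma_i^\sigma(v) \in \{A,-A\}$, and $\gamma_i^\pi(v) = \gamma_i^\sigma(v)$ whenever $\gamma_i^\sigma(v) \in \{C,0\}$. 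In particular the sets $V_i[0]$, and at the end the sets $V_t[C]$, coincide across the two runs, which gives $C^r_{(G,\pi)} = C^r_{(G,\sigma)}$ for these two specific solutions.

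The verification of the induction step reduces to checking how the transmission rule interacts with switching. For a vertex $v \in V'_{i-1}[0]$ that receives information along an edge $vz$, the $(G,\sigma)$-run computes $\sigma(vz)\,\gamma'^{\sigma}_{i-1}(z)$, while the $(G,\pi)$-run computes $\pi(vz)\,\gamma'^{\pi}_{i-1}(z) = \pi(vz)\,\varepsilon(z)\,\gamma'^{\sigma}_{i-1}(z)$. Using $\pi(vz) = \varepsilon(v)\,\varepsilon(z)\,\sigma(vz)$ — which is exactly the statement that $\pi$ is the switching of $\sigma$ at $X$, since $\varepsilon(v)\varepsilon(z) = -$ precisely when $vz \in \partial_G(X)$ — the $(G,\pi)$-value becomes $\varepsilon(v)\,\sigma(vz)\,\gamma'^{\sigma}_{i-1}(z) = \varepsilon(v)\,\gamma_i^\sigma(v)$, as required. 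The same identity shows that two incoming signals $\sigma(vz_1)\gamma(z_1)$ and $\sigma(vz_2)\gamma(z_2)$ agree (or disagree) if and only if the corresponding $\pi$-signals agree (or disagree), because multiplying every incoming signal at $v$ by the common global factor $\varepsilon(v)$ preserves equality and inequality. Hence a vertex becomes confused in the $(G,\pi)$-run exactly when it does in the $(G,\sigma)$-run, closing the induction.

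The main obstacle I anticipate is purely bookkeeping: one must be careful that $\varepsilon(v)$ is attached consistently to the \emph{vertex} $v$ and not to the edge, and that the $p$-vertex placement is adjusted by $\varepsilon(v_i)$ so that the synchronization invariant holds at the very step a vertex is chosen (where $\gamma'_{i-1}(v_i)$ is set directly rather than computed from a neighbor). Once the invariant is correctly stated with the vertex-sign $\varepsilon$ on the informed values, the transmission and confusion rules go through mechanically via the single identity $\pi(vz) = \varepsilon(v)\varepsilon(z)\sigma(vz)$. Since this construction is reversible — switching at the same set $X$ recovers $\sigma$ from $\pi$ — we obtain a value-preserving bijection between $rID$-solutions on $(G,\sigma)$ and on $(G,\pi)$, and therefore $C_r(G,\sigma) = C_r(G,\pi)$.
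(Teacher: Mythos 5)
Your proposal is correct and follows essentially the same route as the paper: both transform an $rID$ solution on one signed graph into one on the other by flipping the placed information at $p$-vertices lying in the switching set $X$, and observe that the set of $c$-vertices is unchanged, giving both inequalities by symmetry. Your version spells out the synchronization invariant $\gamma_i^\pi(v) = \varepsilon(v)\,\gamma_i^\sigma(v)$ and the identity $\pi(vz) = \varepsilon(v)\varepsilon(z)\sigma(vz)$ explicitly via induction, which is a more careful rendering of the step the paper compresses into the single claim that each $p$-vertex transmits the same information after the adjustment.
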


\begin{proof}
    Since $(G,\sigma)$ and $(G,\pi)$ are switching equivalent, we may assume that $\sigma$ can be obtained from $\pi$ by switching at a set $S$ of vertices.
    Take any solution $\gamma$ of $rIDP$ on $(G,\pi)$ with $p$-vertices $v_1,\ldots, v_t$. For $i\in\{1,\ldots, t\}$, let $\zeta_i(v_i)=-\gamma_i(v_i)$ if $v_i\in S$, and $\zeta_i(v_i)=\gamma_i(v_i)$ otherwise.
    We can see that each $v_i$ tells to any of its neighbors a same information under $\gamma$ as under $\zeta$.
    Hence, $\zeta$ is a solution of $rIDP$ on $(G,\sigma)$ with $C_r(G,\pi)$ many $c$-vertices, giving $C_r(G,\sigma)\leq C_r(G,\pi)$.
    Similarly, we can show that the inverse holds as well, i.e., $C_r(G,\pi) \leq C_r(G,\sigma)$.
    Therefore, two sides are equal, as desired.
\end{proof}

Next we show that solutions of the  $rIDP$ give the minimum value of an $ID$ solution on an equivalence class of signed graphs. 

\begin{theo} \label{Theo: relaxed confusion number}
    Let $(G,\sigma)$ be a signed graph. Then $$C_r(G,\sigma) = 
    \min \{C(G,\pi) \colon (G,\sigma) \text{ and } (G,\pi) \text{ are switching equivalent} \}.$$
\end{theo}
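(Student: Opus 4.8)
The plan is to prove the two inequalities separately. Write $m = \min \{C(G,\pi) \colon (G,\sigma) \text{ and } (G,\pi) \text{ are switching equivalent}\}$. Since every $ID$ solution is also an $rID$ solution, any $\pi$ equivalent to $\sigma$ satisfies $C_r(G,\pi) \leq C(G,\pi)$. Combining this with Lemma \ref{Prop: same C_r}, which gives $C_r(G,\sigma) = C_r(G,\pi)$ for all equivalent $\pi$, we immediately obtain $C_r(G,\sigma) = C_r(G,\pi) \leq C(G,\pi)$ for every equivalent $\pi$, and taking the minimum over such $\pi$ yields $C_r(G,\sigma) \leq m$. This direction is essentially routine bookkeeping using the two facts already stated in the excerpt.

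The reverse inequality $m \leq C_r(G,\sigma)$ is the substantive part. Here I would start from an optimal $rID$ solution $\gamma$ on $(G,\sigma)$ with $p$-vertices $v_1, \dots, v_t$ and associated sign choices $A'_i \in \{A, -A\}$, where $C^r_{(G,\sigma)}(\gamma) = C_r(G,\sigma)$. The idea is to convert this into an ordinary $ID$ solution on a cleverly chosen equivalent signature $\pi$, where every $p$-vertex is forced to receive the standard information $A$. The natural candidate is to switch at the set $S = \{v_i \colon A'_i = -A\}$, i.e.\ the set of placement vertices on which the relaxed process chose to plant $-A$. Let $\pi$ be obtained from $\sigma$ by switching at $S$; then $(G,\pi)$ is equivalent to $(G,\sigma)$.

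The key step is then to verify that running the \emph{ordinary} $ID$ on $(G,\pi)$ with the same $p$-vertices $v_1, \dots, v_t$ (now always planting $A$) reproduces, up to sign flips on $S$, the same dissemination pattern as $\gamma$ did on $(G,\sigma)$. Concretely, I would define $\zeta_i(w) = -\gamma_i(w)$ for $w \in S$ and $\zeta_i(w) = \gamma_i(w)$ otherwise, in exact analogy with the proof of Lemma \ref{Prop: same C_r}, and argue by induction on the step $i$ that $\zeta_i$ is precisely the state of the $ID$ process on $(G,\pi)$. The crucial invariant is that for each edge $wz$, the quantity $\sigma(wz)\gamma_{i-1}(z)$ received across that edge equals $\pi(wz)\zeta_{i-1}(z)$: the switch at $S$ flips $\pi(wz)$ relative to $\sigma(wz)$ exactly when exactly one endpoint lies in $S$, and the flips in $\zeta$ versus $\gamma$ compensate for this. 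Because the sign of information transmitted across every edge is preserved, a vertex receives contradictory information under $\zeta$ if and only if it did under $\gamma$, so the $c$-vertices coincide and $C(G,\pi) \leq C^r_{(G,\sigma)}(\gamma) = C_r(G,\sigma)$. Since $\pi$ is equivalent to $\sigma$, this gives $m \leq C_r(G,\sigma)$.

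The main obstacle I anticipate is checking that planting $A$ at $v_i$ under $\pi$ really corresponds to planting $A'_i$ at $v_i$ under $\sigma$: when $v_i \in S$ we flipped all edges of $\partial_G(\{v_i\} \cap S)$, so the switch's effect on the outgoing information must be reconciled with both the sign change of $A'_i$ from $-A$ to $A$ and the edge-sign changes simultaneously. I would handle this by tracking the transmitted signal $\pi(v_iw)\cdot A$ versus $\sigma(v_iw)\cdot A'_i$ edge-by-edge and confirming they agree for every neighbor $w$, distinguishing whether $w \in S$. The induction must also confirm the sign convention is consistent when a vertex becomes informed (not just a $p$-vertex), which is where one must be careful that switching is an involution compatible with the recursive definition of $\gamma_i$; once the edge-wise signal-preservation invariant is established, the rest follows cleanly.
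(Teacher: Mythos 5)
Your proposal is correct and takes essentially the same approach as the paper: the central construction in both is to switch at the set of $p$-vertices on which the relaxed process planted $-A$, thereby turning an optimal $rID$ solution into an ordinary $ID$ solution on an equivalent signature with the same set of $c$-vertices, combined with Lemma \ref{Prop: same C_r} and the inequality $C_r(G,\pi)\leq C(G,\pi)$. The only differences are organizational --- you prove the two inequalities separately starting from $(G,\sigma)$, whereas the paper runs the switching argument on the minimizing signature $\pi'$ and invokes its minimality --- and your edge-by-edge verification of the sign-preservation invariant spells out a step that the paper merely asserts.
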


\begin{proof}
  Let $(G,\pi')$ be a signed graph with 
  $C(G,\pi') = \min \{C(G,\pi) \colon (G,\sigma)$ and  $(G,\pi)$ are switching equivalent$\}.$
  The theorem states that $C_r(G,\sigma)=C(G,\pi')$.
  Let $\gamma$ be an optimal solution (with minimum number of $c$-vertices) of the $rID$ on $(G,\pi')$  with $p$-vertices $v_1,\ldots,v_t$.
  Let $I=\{i\colon 1\leq i\leq t \text{ and } \gamma(v_i)=-A\}$, and denote by $(G,\tau)$ the signed graph obtained from $(G,\pi')$ by switching at $\{v_i\colon i\in I\}$. So, $\gamma$ is a solution of $ID$ on $(G,\tau)$ with the same $p$-vertices $v_1,\ldots,v_t$.
  It follows that $C(G,\tau)\leq C_r(G,\pi') \leq C(G,\pi')$.
  The minimality of $C(G,\pi')$ implies $C(G,\pi') \leq C(G,\tau)$ and
  therefore, $C(G,\tau)=C_r(G,\pi')=C(G,\pi')$.
  By Lemma \ref{Prop: same C_r}, $C_r(G,\sigma)=C_r(G,\pi')$ and
  consequently, $C_r(G,\sigma)=C(G,\pi')$, as desired.
\end{proof}

Surprisingly, there is no difference on the relaxed confusion number between two signed graphs with same underlying graph and opposite signatures, as shown by the following theorem.

\begin{theo} \label{Theo: C_r(G,s)=C_r(G,-s)}
 For every signed graph 
 $(G,\sigma) \colon C_r(G,\sigma) = C_r(G,-\sigma)$.
\end{theo}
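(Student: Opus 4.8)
The plan is to reduce the statement to a comparison of minima over switching classes, and then to transport an optimal solution from one class to the other by a \emph{parity twist} of the placed values.

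First I would set up the reduction. Write $[\sigma]$ for the switching class of $\sigma$. Switching at a set $X$ commutes with global negation, so $\pi \sim \sigma$ if and only if $-\pi \sim -\sigma$; hence $\pi \mapsto -\pi$ is a bijection between $[\sigma]$ and $[-\sigma]$. By Theorem \ref{Theo: relaxed confusion number}, $C_r(G,\sigma)=\min_{\pi\in[\sigma]}C(G,\pi)$ and $C_r(G,-\sigma)=\min_{\rho\in[-\sigma]}C(G,\rho)=\min_{\pi\in[\sigma]}C(G,-\pi)$. So it suffices to prove $\min_{\pi\in[\sigma]}C(G,\pi)=\min_{\pi\in[\sigma]}C(G,-\pi)$, and this statement is symmetric under $\sigma\leftrightarrow-\sigma$; so it is enough to produce one inequality. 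Equivalently, I would argue directly on the $rID$: from any $rID$-solution on $(G,\sigma)$ I construct an $rID$-solution on $(G,-\sigma)$ with the same set of $c$-vertices, and conclude by symmetry, using Lemma \ref{Prop: same C_r} to move within classes.

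The core construction is the parity twist. Fix an $rID$-solution on $(G,\sigma)$ with $p$-vertices $v_1,\dots,v_t$, placed values $\alpha_1,\dots,\alpha_t$ and resulting labels $\gamma$; identify $A,-A$ with $\pm1$. Let $\tau(v)$ be the step at which $v$ receives its label, and set $\epsilon(v)=(-1)^{\tau(v)}$. Run the $rID$ on $(G,-\sigma)$ with the same $p$-vertices in the same order, but placing $\epsilon(v_i)\alpha_i$ on $v_i$ (this uses the relaxed placement of $\pm A$ crucially). I claim the two runs mirror each other, with informed label $\delta(v)=\epsilon(v)\gamma(v)$ and identical $c$-vertices. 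The point is a message identity: the value sent along $vz$ in the negated run is $(-\sigma(vz))\delta(z)=-\sigma(vz)\epsilon(z)\gamma(z)$, whereas in the original run it is $\sigma(vz)\gamma(z)$; so if every neighbor $z$ informing $v$ satisfies $\epsilon(z)=-\epsilon(v)$, then all messages reaching $v$ are multiplied by the common factor $\epsilon(v)$ and therefore agree in the negated run exactly when they agreed originally. Thus $v$ is confused in one run if and only if it is in the other, and a non-confused $v$ receives $\epsilon(v)\gamma(v)$, closing an induction on $\tau$. The two conditions $\epsilon(z)=-\epsilon(v)$ for informers and $\epsilon(z_1)=\epsilon(z_2)$ for co-informers $z_1,z_2$ of $v$ are precisely what $\epsilon=(-1)^{\tau}$ is built to supply: because the process floods, once a neighbor $z$ of $v$ is labelled, $v$ is processed at the very next step, so every flood-informer of $v$ has $\tau(z)=\tau(v)-1$; consecutive layers get opposite $\epsilon$, and co-informers, sharing a layer, get equal $\epsilon$.

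The delicate point, and the step I expect to be the real obstacle, is that a newly introduced $p$-vertex $v_i$ starts a second clock. If $v_i$ is placed adjacent to a vertex $v$ that is simultaneously reached by an older flood, then $v$ has an informer $z$ with $\tau(z)=\tau(v)-1$ and the informer $v_i$ with $\tau(v_i)=\tau(v)$, of opposite parity; the twist then turns agreement at $v$ into disagreement and spuriously inflates the confusion of the negated run. To handle this I would reduce to a canonical, fully layered optimal solution in which, relative to every flood a seed meets, all co-informers of each vertex share a layer, and show that this form can be reached without increasing the number of $c$-vertices by shifting the introduction time of an offending seed by one step (which is free, since we do not minimise the number of $p$-vertices and may place $\pm A$). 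On such layered solutions the map above is a confusion-preserving bijection between $rID$-solutions of $(G,\sigma)$ and of $(G,-\sigma)$, giving $C_r(G,\sigma)\le C_r(G,-\sigma)$; running the identical construction with the roles of $\sigma$ and $-\sigma$ exchanged yields the reverse inequality, hence equality. Verifying that the layered canonical form attains the optimum, i.e.\ that rescheduling seeds never forces extra confusion, is the one place where the informal flooding picture must become a careful induction; the $(G_n,\sigma_n)$ example of Proposition \ref{prop_G_n}, where negating the \emph{optimal} representative is far from optimal, shows that selecting the right representative rather than naively negating is essential.
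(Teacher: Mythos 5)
Your core construction is, up to a global sign, exactly the paper's, but the invariant you use to justify it is false, and the repair you propose for its failure is a dead end. You set $\tau(v_i)=i$ for the seed placed at step $i$, hence $\epsilon(v_i)=(-1)^i$; but that seed broadcasts at step $i$ \emph{together with} the vertices informed at step $i-1$, whose parity is $(-1)^{i-1}$. Consequently your mirror invariant $\delta(v)=\epsilon(v)\gamma(v)$ already fails for any vertex $v$ informed solely by a seed: it receives $-\sigma(v_iv)\epsilon(v_i)\alpha_i=-\epsilon(v)\gamma(v)$, not $\epsilon(v)\gamma(v)$. So the breakdown is not confined to the mixed ``clash'' case you single out; the condition $\epsilon(z)=-\epsilon(v)$ fails for \emph{every} seed informer. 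Worse, the repair --- rescheduling to a ``canonical, fully layered'' optimal solution --- cannot work, because the $rID$ places exactly one seed at every step on a still-uninformed vertex and offers no option to pass or to double up, so there is no ``free'' shifting by one step. Concretely, in $(G_n,\sigma_n)$ of Proposition \ref{prop_G_n}: after the first seed (say $u_1$) the uninformed vertices are exactly $u_2',\dots,u_k'$, and whichever of them is seeded at step $2$, each remaining $u_l'$ is simultaneously informed by that seed and by the level-$1$ vertices $u_1'$ and $u_l$. Every solution therefore contains precisely the configuration your canonical form is supposed to exclude, so the canonical form does not exist and your claimed bijection is never available for this family.

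The correct fix is not rescheduling but the right layer convention, which is what the paper uses: declare the seed placed at step $i$ to be of level $i-1$, i.e.\ twist seeds by $(-1)^{i-1}$ rather than $(-1)^i$. With this convention one proves that \emph{all} informers of a vertex of level $i$ --- flood informers and the step-$i$ seed alike --- lie in level $i-1$ (a non-confused neighbor of level at most $i-2$ would have informed $v$ earlier), so co-informers always share a parity and the induction closes with no side conditions whatsoever. Ironically, your actual construction (same seeds, placed values $(-1)^{\tau(v_i)}\alpha_i$) differs from the paper's only by negating every placed value, and global negation of all seeds plainly preserves the confusion set; so the object you build is a valid witness, but the argument you give for it is not, and the ``careful induction'' you defer --- that rescheduling never forces extra confusion --- is provably impossible to carry out.
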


\begin{proof}
Let $\gamma_t$ be a solution of the $rIDP$ on $(G,\sigma)$
with $p$-vertices $v_1, \dots , v_t$ and a sequence of
functions $\gamma_0, \dots ,\gamma_t$, where
$\gamma_0(v)=0$ for each vertex $v$.
For $j \in \{1, \dots,t\}$, let the $p$-vertex $v_j$ be of level $j-1$, and a vertex $v\in V(G)\setminus\{v_1,\ldots,v_t\}$ is of level $j$, if $j$ is the smallest number such that $\gamma_j(v) \not = 0$ 
and $\gamma_i(v) = 0$ for every $i<j$.
Denote by $S_j$ the set of vertices of level $j$, and by $T_j$ the set of vertices of $S_j$ that are not confused. Clearly, $S_i\subseteq N_G(T_{i-1})$.

Define $\gamma'_t$ on $(G,-\sigma)$ as follows: 
$$   \gamma'_t(v) =  
 \begin{cases}
	\gamma_t(v) & \text{ if } v \text{ has even level }\\
    - \gamma_t(v) & \text{ if } v \text{ has odd level}, \text{ where we assume that } C=-C.\\
\end{cases}
$$

We claim that $\gamma'_t$ is a solution of the $rIDP$ on $(G,-\sigma)$
with $p$-vertices $v_1, \dots, v_t$.
We will show (by induction on $i$) a stronger statement that for each step $i$, every vertex $v\in S_i$ receives information $\gamma'_t(v)$ from its neighbors ($v$ 
receives information $C$ means it receives both $A$ and $-A$). 
We distinguish two cases according to the value of $\gamma'_t(v)$.

Case 1: assume that $\gamma'_t(v)\in \{A,-A\}$.
Since $S_i\subseteq N_G(T_{i-1})$, $v$ has a neighbor in $T_{i-1}$. For any $x\in T_{i-1}\cap N(v)$, $x$ has information $\gamma'_t(x)$ by induction hypothesis. 
So, $v$ receives information $-\sigma(xv)\gamma'_t(x)$ from $x$.
Note that under the solution $\gamma_t$ on $(G,\sigma)$, $v$ also receives information from $x$, which gives $\gamma_t(v)=\sigma(xv)\gamma_t(x)$. Moreover, 
since the levels of $v$ and $x$ have different parities, it follows from the definition of $\gamma'_t$ that $\gamma'_t(v)\gamma_t(x)=-\gamma'_t(x)\gamma_t(v)$.
Now we can conclude from above that $\gamma'_t(v)=-\sigma(xv)\gamma'_t(x)$, i.e., 
$v$ receives information $\gamma'_t(v)$ from $x$ under $\gamma'$.

Case 2: assume that $\gamma'_t(v)=C$. In this case, $\gamma_t(v)=C$ by the definition of $\gamma'_t$.
So, $v$ has two neighbors $u_1,u_2\in T_{i-1}$, such that $u_1$ sends information $A$ to $v$ and $u_2$ sends $-A$ under $\gamma_t$.
Note that for any $u\in\{u_1,u_2\}$, $v$ receives information $\sigma(uv)\gamma_t(u)$ from $u$ under $\gamma_t$, while it receives $-\sigma(uv)\gamma'_t(u)$ under $\gamma'_t$. Also note that $u_1$ and $u_2$ are of the same level. It follows from the definition of $\gamma'_t$ that $\{-\sigma(uv)\gamma'_t(u)\colon u=u_1,u_2\}$ equals to either 
$\{\gamma_t(u)\sigma(uv)\colon u=u_1,u_2\}$ or $\{-\gamma_t(u)\sigma(uv)\colon u=u_1,u_2\}$, both of which equals to $\{A,-A\}$.
So, $v$ receives information $\gamma'_t(v)$ from its neighbors under $\gamma'_t$.

This completes the proof of the claim. 
By the definition of $\gamma'_t$, $\{v \colon \gamma_t(v) = C\} = \{v \colon \gamma'_t(v) = C\}$.
Thus, $C_r(G,-\sigma) \leq C_r(G,\sigma)$. 

The direction $C_r(G,\sigma) \leq C_r(G,-\sigma)$ follows 
analogously and therefore, $C_r(G,\sigma) = C_r(G,-\sigma)$.
\end{proof}

As a consequence of Theorems \ref{Theo: relaxed confusion number} 
and \ref{Theo: C_r(G,s)=C_r(G,-s)} we obtain the 
desired and natural result for balanced and for antibalanced signed graphs. 

\begin{cor}\label{Cor: bound balanced graphs}
    Let $(G,\sigma)$ be a signed graph. If $(G,\sigma)$ is balanced
    or antibalanced, then $C_r(G,\sigma) = 0$.
\end{cor}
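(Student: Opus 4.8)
The plan is to treat the balanced and antibalanced cases separately, in each case reducing $(G,\sigma)$ to a signature whose confusion number is visibly zero, and to use Theorems \ref{Theo: relaxed confusion number} and \ref{Theo: C_r(G,s)=C_r(G,-s)} as the reduction engines. The governing facts are the characterizations recorded just after Theorem \ref{Thm: character_balanced}: a balanced signed graph is equivalent to $(G,\texttt{\bf +})$, and an antibalanced one is equivalent to $(G,\texttt{\bf -})$.

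First I would dispose of the balanced case. If $(G,\sigma)$ is balanced, then it is equivalent to $(G,\texttt{\bf +})$, so $(G,\texttt{\bf +})$ lies in its switching class and Theorem \ref{Theo: relaxed confusion number} gives $C_r(G,\sigma) \le C(G,\texttt{\bf +})$. It then suffices to check that $C(G,\texttt{\bf +}) = 0$. This is immediate from the $ID$ algorithm: since every edge is positive, the factor $\sigma(vz)$ never changes a sign, so any uninformed vertex that receives information simply inherits the value that was placed, namely $A$. Hence every informed vertex carries $A$, and the confused branch in the definition of $\gamma_i$ can never be triggered, whatever the choice of $p$-vertices. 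Thus $C(G,\texttt{\bf +})=0$ and consequently $C_r(G,\sigma)=0$.

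The antibalanced case is the genuinely subtle one, and it is precisely where Theorem \ref{Theo: C_r(G,s)=C_r(G,-s)} is needed. One cannot simply mimic the balanced argument: if $(G,\sigma)$ is antibalanced it is equivalent to $(G,\texttt{\bf -})$, but $C(G,\texttt{\bf -})$ need not vanish, as Proposition \ref{prop_G_n} exhibits graphs with $C(G,\texttt{\bf -}) = \frac{n}{2}-2$. Instead I would observe that globally negating all signs commutes with switching (switching reverses the signs of the edges of a fixed cut $\partial_G(X)$, and this operation is unaffected by flipping every sign), so if $(G,\sigma)$ is equivalent to $(G,\texttt{\bf -})$ then $(G,-\sigma)$ is equivalent to $(G,\texttt{\bf +})$, because negating the all-negative signature yields the all-positive one. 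Thus $(G,-\sigma)$ is balanced, the case just treated gives $C_r(G,-\sigma)=0$, and Theorem \ref{Theo: C_r(G,s)=C_r(G,-s)} transfers this to $C_r(G,\sigma) = C_r(G,-\sigma) = 0$.

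The only real obstacle is conceptual rather than computational: recognizing that the antibalanced case \emph{cannot} be handled by a direct placement argument — since the all-negative signature genuinely carries confusion — and must instead be routed through the symmetry $C_r(G,\sigma)=C_r(G,-\sigma)$. Once that is seen, both cases are short, and the corollary is exactly the ``natural'' conclusion anticipated in the discussion opening this section.
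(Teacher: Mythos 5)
Your proof is correct and takes essentially the same route as the paper, which states the corollary precisely as a consequence of Theorem \ref{Theo: relaxed confusion number} (balanced case, via $C(G,\texttt{\bf +})=0$) and Theorem \ref{Theo: C_r(G,s)=C_r(G,-s)} (antibalanced case, via negation reducing to the balanced case). You have merely written out the details the paper leaves implicit, and your observations --- that $C(G,\texttt{\bf +})=0$ is immediate and that negation commutes with switching --- are exactly the intended justifications.
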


Surprisingly, the bound $\lceil \frac{3n}{5}-4 \rceil$ is achieved for the relaxed confusion number for the family $\mathcal{G}_{5,t}$ of signed graphs.

\begin{prop}\label{prop_relaxed-H_n}
Let $(H,\tau)$ be a signed graph of order $n$.
\begin{enumerate}
    \item If $(H,\tau)\in\mathcal{G}_{6,t}$, then $C_r(H,\tau) =  \frac{n}{2}-4$.
    \item If $(H,\tau)\in\mathcal{G}_{5,t}$, then $C_r(H,\tau) =  \frac{3n}{5}-4$.
    \item If $(H,\tau)\in\mathcal{G}_{4,t}$, then $C_r(H,\tau) =  \frac{n}{2}-3$.
    \item If $(H,\tau)=(K_{t,t},\tau_t)$, then $C_r(H,\tau) =  \frac{n}{2}-2$. 
\end{enumerate}
\end{prop}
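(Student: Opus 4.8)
My plan is to get all four upper bounds for free and to spend all the effort on the matching lower bounds, whose real content is that the relaxation buys nothing for these families — in sharp contrast to balanced graphs, where it drives the confusion number down to $0$ (Corollary \ref{Cor: bound balanced graphs}). For the upper bounds I would combine the general inequality $C_r(H,\tau)\le C(H,\tau)$ with Proposition \ref{prop_H_n}, which already evaluates $C(H,\tau)$ to be exactly $\frac{n}{2}-4$, $\frac{3n}{5}-4$, $\frac{n}{2}-3$, and $\frac{n}{2}-2$ in the four cases; concretely, the placement strategy used there is an ordinary $ID$ solution and hence an $rID$ solution. It therefore remains to prove $C_r(H,\tau)\ge C(H,\tau)$ in every case, i.e.\ that \emph{every} solution of the $rID$ — with its freedom to seed $A$ or $-A$ and to choose $p$-vertices adaptively — leaves at least that many $c$-vertices.

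For the lower bound I would argue directly on the $rID$, mirroring the lower-bound half of Proposition \ref{prop_H_n} while neutralizing the two new degrees of freedom. Two reductions come first. By the within-layer vertex-transitivity recorded in the proof of Proposition \ref{prop_H_n} (any two vertices of a common layer are interchangeable by an automorphism) I may assume the first $p$-vertex is $v_1=u_{k,0}$ for some layer $k$, keeping both $k$ and the block-types ($\tau_t$ versus $-\tau_t$ on each $U_i\cup U_{i+1}$) arbitrary. The second reduction disposes of the seed's sign: replacing the sign placed at $v_1$ by its negative negates $\gamma_i(v)$ at every vertex that ever becomes informed, and this global negation preserves the property ``receives both $A$ and $-A$''; hence the set of $c$-vertices is unchanged and I may assume $v_1$ receives $A$. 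After step $1$ the two neighbouring layers $U_{k-1}$ and $U_{k+1}$ are fully informed, and within each of them the single matched partner of $u_{k,0}$ carries the sign opposite to the common sign of the remaining $t-1$ vertices.

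The heart of the matter is step $2$. I would show that every vertex of $U=U'_{k-2}\cup U'_k\cup U'_{k+2}$ (for $(K_{t,t},\tau_t)$, the analogous set of the $t-1$ non-seed vertices on the seed's side) receives contradictory information already from a \emph{single} adjacent informed layer: for a vertex $u_{m,j}$ with $j\ge 1$, its one matched neighbour in the informed layer is reached by an edge of sign opposite to its other $t-1\ge 2$ edges, so — whatever the common sign on that layer — it hears both $A$ and $-A$. A one-line check shows this computation is invariant under switching $\tau_t\leftrightarrow-\tau_t$ on the block, so it holds for every family member, and it also shows that the escaping vertices are exactly the matched partners $u_{k\pm2,0}$ of the seed, which lie outside $U$. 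Because this disagreement is produced entirely by neighbours that trace back to the unique seed $v_1$, it cannot be repaired: choosing the second $p$-vertex $v_2$ at any location and with either sign only turns $v_2$ itself informed, and handing one extra incoming signal to any other vertex of $U$ cannot erase an already present contradiction. Hence at least $|U|-1$ vertices are confused, and since $c$-vertices are frozen this survives all later steps. Evaluating $|U|$ — namely $3(t-1)$ for $s\in\{5,6\}$, $2(t-1)$ for $s=4$, and $t-1$ for $(K_{t,t},\tau_t)$ — reproduces $\frac{n}{2}-4$, $\frac{3n}{5}-4$, $\frac{n}{2}-3$, $\frac{n}{2}-2$ respectively, matching the upper bounds.

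The step I expect to be the main obstacle is making the ``relaxation is useless'' claim airtight: that no adaptive, multi-step choice of $p$-vertices and signs can rescue more than one vertex of $U$. The clean formulation is the single-source principle above — the forced disagreement in $U$ is inherited from the unique seed, so the sign freedom acts only as a global negation and each later placement informs exactly one new vertex. I would, however, take care over two points: the degenerate identification $U_{k-2}=U_{k+2}$ when $s=4$ (where a nominally escaping vertex may instead be driven into confusion by its second informed layer — which only increases the count and is absorbed by the upper bound), and the verification that the matched partners $u_{k\pm2,0}$ genuinely escape, since the exact equality, not merely the inequality, hinges on them.
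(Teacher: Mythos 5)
Your route is the same as the paper's. The paper's entire proof is one sentence: the only difference between $ID$ and $rID$ is the sign placed at $p$-vertices, so the proof of Proposition \ref{prop_H_n} goes through verbatim; your version --- upper bounds from $C_r\le C$ plus Proposition \ref{prop_H_n}, lower bounds by rerunning the two-step forced-confusion argument and checking that neither the seed's sign nor the sign of $v_2$ matters --- is a more explicit write-up of exactly that. One local repair: your negation reduction must flip the signs of \emph{all} placements simultaneously, not only $v_1$'s; flipping only the seed's sign can turn agreement between $v_1$-descended and $v_2$-descended information into disagreement and so change the set of $c$-vertices. You do not actually need this reduction, since your step-2 computation never uses the seed's sign.

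The genuine gap is the $s=4$ point that you flagged and then waved away. ``Only increases the count and is absorbed by the upper bound'' cannot save an \emph{equality}: if the matched partner $u_{k+2,0}$ is confused under every strategy, the lower bound rises to $|U|=2t-2$, while the upper bound $C_r\le C(H,\tau)=2t-3$ that you import from Proposition \ref{prop_H_n} rests on the very construction (``only the vertices of $U\setminus\{v_2\}$ become $c$-vertices'') that this phenomenon refutes --- so nothing absorbs anything, and the claimed value fails. The phenomenon does occur. Write $\varepsilon_i$ for the sign of the matched edges of the block $U_i\cup U_{i+1}$. After seeding at $u_{k,0}$, the vertex $u_{k+2,0}$ hears the consistent value $\varepsilon_k\varepsilon_{k+1}A$ from $U_{k+1}$ and the consistent value $\varepsilon_{k+2}\varepsilon_{k+3}A$ from $U_{k+3}$, and these disagree exactly when the circuit $u_{k,0}u_{k+1,0}u_{k+2,0}u_{k+3,0}$ is negative --- e.g.\ for block types $(\tau_t,\tau_t,\tau_t,-\tau_t)$. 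The sign of this circuit does not depend on $k$, on the sign placed at $v_1$, or on any later choice, so for such members every $ID$ and $rID$ solution leaves exactly $2t-2=\frac{n}{2}-2$ confused vertices; part 3 is false for them. This defect is the paper's as much as yours (Propositions \ref{prop_H_n}.3 and \ref{prop_relaxed-H_n}.3 tacitly assume the distinguished circuit is positive), so in this respect your proposal reproduces the paper's proof, gap included; but a correct proof must either add that positivity hypothesis or split the $s=4$ answer according to the sign of that circuit. A smaller, repairable issue you inherit the same way: for $s=5$ the construction $v_2=u_{k+2,1}$ of Proposition \ref{prop_H_n} fails whenever $\varepsilon_{k+2}\varepsilon_{k+3}\varepsilon_{k+4}=+$, because $u_{k+3,0}$ then hears contradictory values from $U_{k+4}$ and from $v_2$; the stated value $\frac{3n}{5}-4$ is still correct, but the achieving strategy must instead place $v_2$ in the seed layer $U_k$, so that the escaping vertices $u_{k+2,0}$ and $u_{k+3,0}$ each see only one informed layer.
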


\begin{proof}
    Note that the difference between $ID$ and $rID$ is that $p$-vertices are assigned with information $A$ in $ID$ while with information $A$ or $-A$ in $rID$. Hence, following the same proof as for Proposition \ref{prop_H_n}, this proposition can be verified as well.
\end{proof}

Since $C_r(G,\sigma)\leq C(G,\sigma)$ holds for any signed graph $(G,\sigma)$, the following statements are basically immediate consequences of Propositions \ref{Theo: bound balanced graphs}, \ref{Prop: tree, circuit, vertex} and  \ref{prop_H_n}.

\begin{cor} \label{Cor: transfer strong - relaxed}
    Let $n \geq 1$ be an integer and $(G,\sigma)$ be a signed graph of order $n$.
    \begin{enumerate}
        \item If $G$ is a tree, then $C_r(G,\sigma) = 0$.
        \item If $G$ is a circuit, then $C_r(G,\sigma) = 0$.
        \item If $\ell (G,\sigma) \leq 1$, then $C_r(G,\sigma) = 0$.
        \item If $G$ is of maximum degree $\Delta \geq 3$, then $C_r(G,\sigma) \leq n - 2 - \Delta$ and the bound is attained for 
    any $\Delta$.
        \item If $G$ is of maximum degree $\Delta \geq 3$, then $C_r(G,\sigma)\leq (1-\frac{2}{\Delta})n$.
    \end{enumerate}
\end{cor}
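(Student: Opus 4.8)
The engine for the whole corollary is the inequality $C_r(G,\sigma)\le C(G,\sigma)$ recorded just before Lemma \ref{Prop: same C_r}, together with $C_r(G,\sigma)\ge 0$. For each item I would first read off the conclusion from the corresponding statement about the (non-relaxed) confusion number and then sandwich $C_r$. Concretely, item 1 is immediate from part 1 of Proposition \ref{Prop: tree, circuit, vertex}, since $C(G,\sigma)=0$ forces $C_r(G,\sigma)=0$; items 4 and 5 follow the same way from the maximum-degree Proposition \ref{prop_H_n}, giving $C_r\le C\le n-2-\Delta$ and $C_r\le C\le (1-\frac{2}{\Delta})n$ respectively. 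The one point of care is the attainment in item 4: the extremal graphs $(G_n,\sigma_n)$ used there are balanced, so by Corollary \ref{Cor: bound balanced graphs} they satisfy $C_r=0$ and do \emph{not} certify the bound for $C_r$. Instead I would invoke Proposition \ref{prop_relaxed-H_n}(4): the graph $(K_{t,t},\tau_t)$ has $\Delta=t$, order $n=2t$, and $C_r=\frac{n}{2}-2=n-2-\Delta$, so taking $t=\Delta$ realizes the bound for every $\Delta\ge 3$.

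For item 2 the generic circuits are handled just as above: by part 2 of Proposition \ref{Prop: tree, circuit, vertex} we have $C(C_k,\sigma)=0$ whenever $(C_k,\sigma)\ne(C_5,\texttt{\bf -})$, whence $C_r=0$. The only genuinely separate case is $(C_5,\texttt{\bf -})$, where $C=1$. Here I would pass to the equivalence class: $(C_5,\texttt{\bf -})$ is a negative circuit, hence by Theorem \ref{equ_classes} equivalent to the signing of $C_5$ with a single negative edge, for which part 2 of Proposition \ref{Prop: tree, circuit, vertex} gives $C=0$. Lemma \ref{Prop: same C_r} (or directly Theorem \ref{Theo: relaxed confusion number}) then yields $C_r(C_5,\texttt{\bf -})=0$.

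Item 3 is where the real work sits. If $\ell(G,\sigma)=0$ the graph is balanced and Corollary \ref{Cor: bound balanced graphs} already gives $C_r=0$. If $\ell(G,\sigma)=1$, Lemma \ref{Lem: Min_signature} lets me replace $\sigma$ by an equivalent signature $\gamma$ with $E_\gamma^-$ a single edge $xy$, and by Lemma \ref{Prop: same C_r} it suffices to show $C_r(G,\gamma)=0$; since $C_r\le C$ I would in fact target the stronger $C(G,\gamma)=0$. Passing to the component containing $xy$ (the others are all-positive and contribute no confusion), the edge $xy$ is not a bridge, for otherwise every circuit would avoid it and $(G,\gamma)$ would be balanced, contradicting $\ell=1$. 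The plan is to run the ordinary $ID$ spreading the single value $A$ from an arbitrary root: because $xy$ is the only negative edge, every vertex other than $x$ and $y$ only ever receives $A$, so the sole possible source of confusion is the edge $xy$. I would neutralize it by designating as a $p$-vertex, again with value $A$, whichever of $x,y$ the spreading reaches second, placed one step before it would otherwise be informed (if both are reached at the same step they both receive $A$ and nothing special is needed). Then both endpoints carry $A$, the edge $xy$ never delivers a dissenting value to a vertex at the moment that vertex becomes informed, and no vertex is confused, so $C(G,\gamma)=0$.

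The hard part will be exactly this last scheduling argument. One has to verify that the ``protect the second endpoint'' move is always available and never triggers a new conflict: using the one-step delay built into the update rule, the endpoint reached second is still uninformed at the step its neighbours light up, so it can legitimately be chosen as $v_i$; placing it with value $A$ then sends $-A$ only across $xy$ to the other endpoint, which is by then already informed and hence does not re-receive, while along every positive edge it transmits the consistent value $A$. Making this precise — in particular fixing the order in which the $p$-vertices are placed so that the spreading reaches the endpoints in a controlled way, and checking that the mandatory filler placements at the intermediate steps never drive an endpoint to $-A$ prematurely — is the delicate step. Everything else in the corollary is bookkeeping on top of the already-established bounds and the identity $C_r\le C$.
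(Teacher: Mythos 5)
Items 1, 2, 4 and 5 of your argument are correct and essentially coincide with the paper's route: everything is read off from $C_r(G,\sigma)\le C(G,\sigma)$ together with Proposition \ref{Prop: tree, circuit, vertex} and the maximum-degree bounds, the exceptional case $(C_5,\texttt{\bf -})$ is handled by switching invariance (the paper could equally invoke Corollary \ref{Cor: bound balanced graphs}, since $(C_5,\texttt{\bf -})$ is antibalanced), and you correctly notice that the attainment in item 4 must be certified by $(K_{\Delta,\Delta},\tau_\Delta)$ via Proposition \ref{prop_relaxed-H_n}.4 rather than by the balanced graphs $(G_n,\sigma_n)$ --- which is exactly the remark the paper makes immediately after the corollary.

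The gap is in item 3, in the claim that the ordinary $ID$ with a single negative edge $xy$ can be run from an \emph{arbitrary} root and then repaired by protecting the endpoint reached second. This is false, and your parenthetical escape clause (``if both are reached at the same step they both receive $A$'') is precisely where it fails, because the algorithm forces a placement at \emph{every} step. Take $C_5$ with vertices $r,a,x,y,b$ in cyclic order and the single negative edge $xy$, and root at $r$. After step 1 the vertices $r,a,b$ hold $A$ and $V_1[0]=\{x,y\}$, so the step-2 $p$-vertex must be $x$ or $y$; whichever one is placed sends $-A$ across $xy$ to the other endpoint, which simultaneously receives $A$ from its positive neighbor ($a$ or $b$) and becomes a $c$-vertex. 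So from this root no continuation achieves zero confusion: the root must be chosen, not arbitrary. The repair is the same trick the paper uses in the proof of Proposition \ref{Prop: tree, circuit, vertex}.2 for $C_5$: root at a neighbor $r$ of $x$ with $r\ne y$ (such $r$ exists because $xy$ lies on a circuit). At step 1, $x$ receives only $A$ (the edge $xy$ transmits nothing, $y$ being uninformed at step 0), and $y$ either also receives $A$ along a positive edge from $r$ or remains uninformed; in the latter case make $y$ the $p$-vertex of step 2. After that, both ends of the unique negative edge hold $A$, so by induction every message ever sent travels along a positive edge from an $A$-vertex, all subsequent filler placements (value $A$) are harmless, and no vertex is confused, giving $C(G,\gamma)=0$. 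With this modification your reduction --- Lemma \ref{Lem: Min_signature} to pass to a signature $\gamma$ with $\vert E_\gamma^-\vert\le 1$, Lemma \ref{Prop: same C_r} for invariance of $C_r$, and $C_r\le C$ --- does complete item 3.
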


By Proposition \ref{prop_relaxed-H_n}, the bound in Corollary \ref{Cor: transfer strong - relaxed}.4 is achieved by the signed graph $(K_{\Delta,\Delta},\tau_\Delta)$ for any $\Delta$.

Corollary \ref{Cor: transfer strong - relaxed}.3 is a first result on the relation of 
the confusion number and the frustration index of a signed graph, which gives information
on the signed graphs within one equivalence class.

\begin{theo} \label{Prop: 2nd bound with frustration}
There are a family of signed graphs $(H_n,\sigma_n)$ of order $n$ for each even $n\geq 8$ such that 
$C_r(H_n,\sigma_n) < \ell(H_n,\sigma_n)$ 
and 
$\lim_{n \rightarrow \infty} \frac{C_r(H_n,\sigma_n)}{\ell(H_n,\sigma_n)} = 1$.   
\end{theo}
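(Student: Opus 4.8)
The plan is to take the family to be the complete bipartite signed graphs themselves: for even $n \geq 8$ set $t = n/2 \geq 4$ and let $(H_n,\sigma_n) = (K_{t,t},\tau_t)$. By Proposition \ref{prop_relaxed-H_n}(4) we already know $C_r(H_n,\sigma_n) = \frac{n}{2}-2 = t-2$, so the whole task reduces to computing the frustration index $\ell(K_{t,t},\tau_t)$ and showing it equals $t$. Granting this, $C_r = t-2 < t = \ell$ gives the strict inequality, and $\frac{C_r}{\ell} = \frac{t-2}{t} = 1 - \frac{4}{n} \to 1$ gives the limit.

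The upper bound $\ell(K_{t,t},\tau_t) \le t$ is immediate: deleting the $t$ edges of $E^-_{\tau_t}$ leaves an all-positive, hence balanced, graph. For the lower bound I would invoke Lemma \ref{Lem: Min_signature}, which identifies $\ell$ with the minimum number of negative edges over the switching class, equivalently $\ell = \min_{\alpha,\beta} \lvert\{a_ib_j : \tau_t(a_ib_j) \neq \alpha_i\beta_j\}\rvert$ over all vertex $2$-colourings $\alpha\colon A \to \{\pm1\}$ and $\beta\colon B \to \{\pm1\}$. Writing $X = \sum_i \alpha_i$, $Y = \sum_j \beta_j$ and $R = \sum_i \alpha_i\beta_i$, a short count of matched versus mismatched edges rewrites this mismatch number as
$$ f(\alpha,\beta) = \frac{t^2}{2} + R - \frac{XY}{2} = \frac{t^2}{2} - \frac{1}{2}\bigl(XY - 2R\bigr), $$
so that the desired lower bound $\ell \ge t$ is equivalent to the bilinear estimate $XY - 2R \le t(t-2)$ for all $\alpha,\beta \in \{\pm1\}^t$.

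The main work, and the only genuine obstacle, is this bilinear maximisation. I would fix $\alpha$ and optimise over $\beta$: since $XY - 2R = \langle \alpha, (J-2I)\beta\rangle$ with $J$ the all-ones matrix, the optimal $\beta$ yields $XY - 2R = \lVert (J-2I)\alpha\rVert_1 = \sum_i \lvert X - 2\alpha_i\rvert$. If $\alpha$ has $p$ entries equal to $+1$, this equals $p\,\lvert 2p-t-2\rvert + (t-p)\,\lvert 2p-t+2\rvert$, and a routine case analysis on $u = 2p-t$, splitting at $u \ge 2$, $\lvert u\rvert \le 2$, and $u \le -2$, shows the value is $\lvert u\rvert(t-2)$ on the two outer ranges and $2t - u^2$ on the middle range. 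On the middle range $2t - u^2 \le 2t$, the outer maximum is $t(t-2)$ (attained at $u = \pm t$, i.e. a monochromatic colouring), and $t(t-2) \ge 2t$ holds precisely when $t \ge 4$. This is exactly where the hypothesis $n \ge 8$ enters: for $t = 3$ the inequality reverses and the parity of $u$ excludes $u = 0$, so the middle range dominates and one gets the smaller value $\ell(K_{3,3},\tau_3) = 2$ instead. Hence $\ell(K_{t,t},\tau_t) = t$ for all $t \ge 4$, which completes the proof.
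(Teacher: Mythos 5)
Your proposal is correct, and it uses the same family as the paper: $(H_n,\sigma_n)=(K_{t,t},\tau_t)$ with $t=n/2$, the same appeal to Proposition \ref{prop_relaxed-H_n} for $C_r = t-2$, and the same upper bound $\ell \le t$ by deleting the negative perfect matching. Where you genuinely diverge is the key lower bound $\ell(K_{t,t},\tau_t) \ge t$. The paper's proof is a short packing argument: it exhibits $t$ pairwise edge-disjoint negative $4$-circuits $C_i=[a_ib_ia_{i+2}b_{i+1}]$ (indices modulo $t$); since any edge set whose removal balances the graph must contain an edge of every negative circuit, edge-disjointness forces $\ell \ge t$ at once. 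Your proof instead computes $\ell$ exactly through the switching-class characterization of the frustration index (which needs Lemma \ref{Lem: Min_signature} \emph{together with} the observation that balance is switching-invariant, giving $\ell = \min$ of $\vert E^-_\gamma\vert$ over equivalent $\gamma$ --- the lemma alone gives only one inequality) and then solves the bilinear optimization $\max_{\alpha,\beta}\,(XY-2R)$ over $\pm 1$ vectors; I checked your reduction $f = \tfrac{t^2}{2}-\tfrac12(XY-2R)$, the $\ell_1$ step for fixed $\alpha$, and the three-case evaluation ($\vert u\vert(t-2)$ on the outer ranges, $2t-u^2$ in the middle), and all are correct. The trade-off: the paper's argument is a few lines but ad hoc --- one must guess the right circuits, and their edge-disjointness silently fails at $t=3$ (e.g.\ $a_1b_2$ lies in both $C_1$ and $C_2$), which is the hidden reason the statement needs $n\ge 8$. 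Your computation is longer but self-contained, pinpoints exactly where $t\ge 4$ enters (namely $t(t-2)\ge 2t$), and yields the bonus value $\ell(K_{3,3},\tau_3)=2$ showing the bound genuinely fails below the threshold.
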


\begin{proof}
Let $n=2t$ and let $(H_n,\sigma_n)=(K_{t,t},\tau_t)$ with partition sets $A, B$, 
and $E^-_{\tau_t} = \{a_ib_i \colon i \in \{1, \cdots ,t\}, a_i \in A, b_i \in B \}$, see Figure \ref{fig:figure-1} for $(H_{10},\sigma_{10})$.

First we show that $\ell(H_n,\sigma_n)=\frac{n}{2}$. 
Since the removal of $a_1b_1,\ldots, a_tb_t$ results in an all positive (thus, balanced) signed graph, $\ell(H_n,\sigma_n)\leq t$. 
On the other side, notice that $C_1,C_2,\ldots,C_t$ are pairwise edge-disjoint negative circuits, where $C_i=[a_ib_ia_{i+2}b_{i+1}]$. 
Recall that a signed graph is balanced if and only if it contains no negative circuits.
Thus, any set of edges whose removal from $(H_n,\sigma_n)$ results in a balanced signed graph must contain at least one edge from each $C_i$. It follows that $\ell(H_n,\sigma_n)\geq t$.
Now we can conclude from above that $\ell(H_n,\sigma_n)=t=\frac{n}{2}$.

By Proposition \ref{prop_relaxed-H_n}, $C_r(H_n,\sigma_n) =  \frac{n}{2}-2$.
So, we can derive that $C_r(H_n,\sigma_n) < \ell(H_n,\sigma_n)$ 
and 
$\lim_{n \rightarrow \infty} \frac{C_r(H_n,\sigma_n)}{\ell(H_n,\sigma_n)} = 1$.
\end{proof}

\section{Problems and conjectures} \label{Sec: Problems}

Based on the aforementioned results, 
we propose the following problems and conjectures on the (relaxed)
confusion number of signed graphs. 

\begin{con} \label{Conj: bound_strong} 
For every signed graph $(G,\sigma)$ on $n$ vertices, $C(G,\sigma) \leq \lceil \frac{3n}{5}-4 \rceil$.   
\end{con}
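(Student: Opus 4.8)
The plan is to recast the inequality as a guarantee on the informed vertices. Since the $ID$ process ends with every vertex either informed or confused, we have $C(G,\sigma)=n-|R|$, where $R$ is the set of informed ($A$ or $-A$) vertices of the chosen solution; hence it suffices to design, for every $(G,\sigma)$, a placement strategy with $|R|\ge \tfrac{2n}{5}+4$. The feature I would exploit is the confusion mechanism itself: a vertex becomes a $c$-vertex only when it has at least two \emph{strictly earlier} informed neighbours sending opposite signals, whereas vertices informed in the same step never conflict. (This is exactly what makes $C(K_n,\texttt{\bf -})=0$ even though all its triangles are negative.) I would therefore picture the spreading as a layered, multi-source BFS and try to bound the number of honest conflicts.

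First I would reduce to connected graphs of large order; components that are trees or circuits contribute at most a constant by Proposition \ref{Prop: tree, circuit, vertex}, and the additive term $-4$ gives room to absorb the small pieces. Note that the inequality is only restrictive once $n$ is large, since e.g. $\lceil\tfrac{3n}{5}-4\rceil<0$ for $n\le 5$, so the binding regime is genuinely asymptotic. Next I would dispatch the extreme degree regimes with the estimates already available: informing a maximum-degree vertex first fills a ball $\{v_1\}\cup N_G(v_1)$ with no conflict, so $\Delta\ge n-2$ yields $C=0$, and $\Delta\ge \tfrac{2n}{5}+2$ yields $C(G,\sigma)\le n-2-\Delta\le \tfrac{3n}{5}-4$ directly. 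At the other end, for $\Delta\le 4$ the estimate $C(G,\sigma)\le(1-\tfrac2\Delta)n\le \tfrac{n}{2}$ settles the claim for all sufficiently large $n$, leaving only finitely many small graphs to verify by hand.

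The hard part is the intermediate regime $5\le \Delta<\tfrac{2n}{5}+2$, which is precisely where the extremal examples sit: for $(H,\tau)\in\mathcal G_{5,t}$ one has $\Delta=2t<\tfrac{2n}{5}+2$, yet the generic estimate $n-2-\Delta=3t-2$ overshoots the truth $3t-4=\tfrac{3n}{5}-4$ recorded in Proposition \ref{prop_H_n}. Thus the conjecture is out of reach of the present bounds and genuinely requires the sharper two-source placement used there. My approach would be to globalise that trick: place a high-degree $v_1$ to inform one ball for free, then choose the later placement vertices so that each new placement ``rescues'' an entire near-clique of would-be $c$-vertices, and refine the density count behind the earlier degree argument (using $m_1\ge n_1$ for non-tree informed components and $m^{*}\ge 2n_2$ for the confused part) by charging every $c$-vertex to the dense informed block that confuses it.

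The main obstacle, and presumably the reason this is stated as a conjecture, is that in the intermediate regime confusion can cascade through several BFS layers, and there is no transparent way to certify, for an arbitrary dense-but-incomplete signed graph, that some placement keeps at least $\tfrac{2n}{5}+4$ vertices consistently informed; the automatically spreading ball interacts uncontrollably with the remainder. I expect one needs either a structural/extremal lemma — roughly, that every signed graph contains a large induced subgraph that is informable in few layers and balanced along its transmitting edges — or a discharging scheme tuned to the ratio $\tfrac{3}{5}$. The constant $-4$ is the most delicate point: it already forces the second placement on $\mathcal G_{5,t}$ and rules out any purely degree-based proof, so any successful argument must extract that extra saving uniformly.
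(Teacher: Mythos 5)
The statement you were asked to prove is not a theorem of the paper at all: it is stated as Conjecture \ref{Conj: bound_strong}, and the paper offers no proof of it. So there is no paper argument to compare against, and the only question is whether your proposal settles the problem. It does not, and you essentially concede this yourself. Your treatment of the extreme regimes is correct and matches what the paper's own partial results give: $\Delta \ge n-2$ gives $C(G,\sigma)=0$; $\Delta \ge \frac{2n}{5}+2$ gives $C(G,\sigma)\le n-2-\Delta\le \frac{3n}{5}-4$; and $\Delta\le 4$ gives $C(G,\sigma)\le(1-\frac{2}{\Delta})n\le\frac{n}{2}\le\frac{3n}{5}-4$ once $n\ge 40$. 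Your diagnosis of the remaining regime $5\le\Delta<\frac{2n}{5}+2$ is also accurate: for $(H,\tau)\in\mathcal{G}_{5,t}$ one has $\Delta=2t=\frac{2n}{5}$ and $n-2-\Delta=3t-2$, which overshoots the true value $3t-4$ of Proposition \ref{prop_H_n}, so no purely degree-based bound can reach the conjectured constant. But for this entire intermediate regime your proposal offers only a plan (``globalise the two-source trick'', ``a discharging scheme tuned to $\frac{3}{5}$'', a hoped-for structural lemma) with no actual argument. A correct identification of the obstruction is not a proof; the part you leave open is precisely the part that makes the statement a conjecture.

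One concrete error deserves mention. You read the regime where $\lceil\frac{3n}{5}-4\rceil<0$ as making the inequality ``not restrictive''; it is the opposite. Since $C(G,\sigma)\ge 0$ always, a negative upper bound is violated, not vacuous. Indeed the conjecture as literally stated fails for small $n$: by Proposition \ref{Prop: tree, circuit, vertex}, $C(C_5,\texttt{\bf -})=1$, while $\lceil\frac{3\cdot 5}{5}-4\rceil=-1$; and since the confusion number is additive over components, $(C_5,\texttt{\bf -})$ together with an isolated vertex gives a graph on $n=6$ vertices with $C=1>0=\lceil\frac{3\cdot 6}{5}-4\rceil$. So any serious attack on this conjecture must first reformulate it (restrict to large $n$, assume connectivity, or replace the bound by $\max\{0,\lceil\frac{3n}{5}-4\rceil\}$); your plan to ``absorb the small pieces'' via the additive $-4$ and to ``verify finitely many small graphs by hand'' would run into these counterexamples rather than around them.
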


Determining the confusion number or the relaxed confusion number of a 
signed graph seem to be hard problems. 
Another hard problem is to determine the smallest number $k$ 
for which there is a solution $\gamma_k$ of the $IDP$ or of the $rIDP$
for a signed graph. 
In other words, the latter problems ask for the minimum number $k$ of 
 $p$-vertices in a solution of the $IDP$ or the $rIDP$ 
on $(G,\sigma)$, respectively.
These questions are related to determining the burning
number $b(G)$ of a graph $G$, which had been introduced in \cite{bonato2014burning}. If $\sigma = \textbf{+}$, then 
determining $k$ for the $IDP$ and if $(G,\sigma)$ is balanced or antibalanced, then determining $k$ for 
the $rIDP$ on $(G,\sigma)$ is equivalent to determining $b(G)$, where $k \in \{b(G)-1, b(G)\}$, due to slight differences between the definitions of graph burning and information dissemination.
In \cite{bonato2016burn} it is conjectured that $b(G) \leq \lceil \sqrt{n} \rceil$ if $G$ is of order $n$. An approximate confirmation of the conjecture was given recently in \cite{norin2022burning}. Determining the burning number is $NP$-complete even for specific classes of trees \cite{hiller2020burning}. 

Let $(G,\sigma)$ be a signed graph. 
If $C(G,\sigma) \leq \lceil \frac{3n}{5}-4 \rceil$, then 
$C_r(G,\sigma) \leq \lceil \frac{3n}{5}-4 \rceil$.
With view on Theorem \ref{Prop: 2nd bound with frustration} 
one may expect that $C_r(G,\sigma) \leq \ell(G,\sigma)$. 
We conjecture the following to be true. 

\begin{con} Let $G$ be a graph of order $n \geq 1$. 
For every signed graph $(G,\sigma)$: 
$C_r(G,\sigma) \leq \min \{\ell(G,\sigma), \lceil \frac{3n}{5}-4 \rceil \}$.
\end{con}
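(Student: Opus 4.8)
The plan is to establish the two bounds in the $\min$ separately, starting with $C_r(G,\sigma)\le \ell(G,\sigma)$. Write $k=\ell(G,\sigma)$. By Lemma \ref{Lem: Min_signature} there is a signature $\gamma$ equivalent to $\sigma$ whose negative edges form a minimum frustration set $E$ with $|E|=k$, so that the positive part $G-E$ is all-positive and hence balanced. By Lemma \ref{Prop: same C_r}, $C_r(G,\sigma)=C_r(G,\gamma)$, so I may work entirely with $(G,\gamma)$ and take as target the all-positive assignment (every vertex eventually carries $A$). With respect to this target the only edges that can transmit a ``wrong'' value are the $k$ negative edges of $\gamma$; across every positive edge an informed vertex passes exactly $A$.

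The core construction is to choose a set $D\subseteq V(G)$ with $|D|\le k$ that covers $E$, i.e.\ contains at least one endpoint of each negative edge. Then $G-D$ is all-positive, so seeding it with information $A$ and letting the process run is \emph{confusion-free}: every vertex of $G-D$ only ever receives $A$. The goal is to organise the $rID$ so that all confusion is confined to $D$; this would give $C_r(G,\sigma)=C_r(G,\gamma)\le |D|\le k=\ell(G,\sigma)$. The relaxation is not strictly needed for this particular target, but it is exactly what makes the same scheme work after replacing the all-positive target by a frustration-optimal $\pm$ labelling, which is the natural formulation for a general $\sigma$.

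The main obstacle is \emph{scheduling} against the synchronous dynamics of the algorithm. In each step every uninformed vertex adjacent to the informed region is updated simultaneously, so a vertex $d\in D$ cannot be kept dormant once it borders the informed set. If $d$ is first reached only through agreeing (positive) neighbours, it becomes informed with value $A$ rather than confused, and in the following step it sends the wrong value $-A$ along a negative edge $dw$; should the other endpoint $w\in G-D$ still be uninformed and be receiving $A$ from a positive neighbour at that very step, then $w$ becomes confused and the confusion \emph{leaks} out of $D$, breaking the bound $|D|$. The heart of a proof must therefore be a scheduling statement: using the freedom to place many $p$-vertices (we do not minimise their number) and to order them, arrange that every negative edge is ``activated'' only after both of its endpoints are already informed. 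I expect this synchronisation to be the decisive difficulty, and it is presumably the reason the statement remains a conjecture rather than a theorem.

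For the second bound $C_r(G,\sigma)\le \lceil \tfrac{3n}{5}-4\rceil$ I would separate by maximum degree. For $\Delta\le 5$ Corollary \ref{Cor: transfer strong - relaxed}.5 already yields $C_r(G,\sigma)\le (1-\tfrac{2}{\Delta})n\le \tfrac{3}{5}n$, so there only the additive constant and the ceiling need to be recovered, while trees and circuits ($\Delta\le 2$) are covered by parts 1 and 2 of the same corollary. The genuinely open range is $\Delta\ge 6$, where $(1-\tfrac{2}{\Delta})n$ exceeds $\tfrac{3}{5}n$ and where the sharp examples $\mathcal{G}_{5,t}$ (of maximum degree $2t$) live. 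Since $C_r(G,\sigma)\le C(G,\sigma)$, this range is exactly Conjecture \ref{Conj: bound_strong} read for $C$, so proving the $\tfrac{3n}{5}$ half is essentially as hard as the strong conjecture; combining it with the frustration bound above then gives the full $\min$.
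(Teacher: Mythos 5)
This statement is one of the paper's open conjectures; the paper offers no proof of it, and your proposal is not one either --- by your own account, both decisive steps are left unresolved, so what you have is a research plan with two genuine gaps. The first gap is the scheduling/confinement lemma for the frustration half. Your normalization is fine: by Lemma \ref{Lem: Min_signature} and Lemma \ref{Prop: same C_r} you may assume the negative edges form a minimum frustration set $E$ with $|E|=\ell(G,\sigma)$, and choosing a transversal $D$ of $E$ is cheap. But the claim that confusion can be ``confined to $D$'' has no supporting mechanism, and the synchronous dynamics works against it: you cannot keep a vertex of $D$ dormant once it borders the informed region, and you cannot delay an edge. Which endpoint of a negative edge becomes confused --- or whether, instead, the wrong value $-A$ gets installed at an endpoint and then propagates along \emph{positive} edges --- is dictated by the arrival times of the information waves (i.e., by distances to the $p$-vertices placed so far, under the constraint of one placement per step), not by your choice of $D$. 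In the leakage scenario you yourself describe, the confusion created where the $A$-wave and the $-A$-wave meet can lie arbitrarily far from $D$ and can involve many vertices per negative edge, so $|D|$ simply does not bound the number of $c$-vertices without the missing synchronization statement. Nothing in the proposal constructs a schedule achieving that synchronization, and the sharp examples $(K_{t,t},\tau_t)$ (where $C_r=\frac{n}{2}-2>0$ while $\ell=\frac{n}{2}$) show that the strongest form of it --- informing both endpoints of every negative edge simultaneously --- is in general impossible, so any proof must handle partial confinement, which is exactly the open difficulty.

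The second gap is that the $\lceil \frac{3n}{5}-4 \rceil$ half is not proved in any regime. For $\Delta \leq 5$, Corollary \ref{Cor: transfer strong - relaxed}.5 gives $C_r(G,\sigma)\leq (1-\frac{2}{\Delta})n \leq \frac{3n}{5}$, which is strictly weaker than $\lceil \frac{3n}{5}-4 \rceil$; the additive constant $4$ is not ``recovered'' anywhere, and for small graphs (e.g.\ $n\leq 6$) the conjectured bound is even smaller than trivial bounds, so this is not a cosmetic issue. For $\Delta \geq 6$ you explicitly reduce to Conjecture \ref{Conj: bound_strong}, i.e., you assume an open conjecture of the same paper (in its stronger form, for $C$ rather than $C_r$). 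In summary: the reduction via switching invariance is the right first move and is consistent with how the paper itself relates $C_r$ to $\ell$ (Corollary \ref{Cor: transfer strong - relaxed}.3 and Theorem \ref{Prop: 2nd bound with frustration}), but neither bound in the minimum is established, so the statement remains exactly as open after your argument as before it.
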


From the application point of view, it would be interesting to analyse real world cyber-physical social networks with regard to information dissemination processes. In particular, 
it might be of further interest whether there are real world signed 
cyber-physical social networks (or any other real world application of that
information dissemination concept) where such a huge portion of the network actors 
(as proved in Proposition \ref{prop_relaxed-H_n}.2) will be confused.

\bibliography{IDSN_Lit}{}

\begin{thebibliography}{10}

\bibitem{bloch2023centrality}
F.~Bloch, M.~O. Jackson, and P.~Tebaldi.
\newblock Centrality measures in networks.
\newblock {\em Social Choice and Welfare}, 61(2):413--453, 2023.

\bibitem{bonato2014burning}
A.~Bonato, J.~Janssen, and E.~Roshanbin.
\newblock Burning a graph as a model of social contagion.
\newblock In {\em Algorithms and Models for the Web Graph: 11th International
  Workshop, WAW 2014, Beijing, China, December 17-18, 2014, Proceedings 11},
  pages 13--22. Springer, 2014.

\bibitem{bonato2016burn}
A.~Bonato, J.~Janssen, and E.~Roshanbin.
\newblock How to burn a graph.
\newblock {\em Internet Mathematics}, 12(1-2):85--100, 2016.

\bibitem{cappello2022frustration}
C.~Cappello and E.~Steffen.
\newblock Frustration-critical signed graphs.
\newblock {\em Discrete Applied Mathematics}, 322:183--193, 2022.

\bibitem{Cartwright_Harary_1956}
D.~Cartwright and F.~Harary.
\newblock Structural balance: a generalization of {H}eider's theory.
\newblock {\em Psychological Review}, 63:277 -- 293, 1956.

\bibitem{dressler2018cyber}
F.~Dressler.
\newblock Cyber physical social systems: Towards deeply integrated hybridized
  systems.
\newblock In {\em 2018 International Conference on Computing, Networking and
  Communications (ICNC)}, pages 420--424. IEEE, 2018.

\bibitem{Balance}
F.~Harary.
\newblock On the notion of balance of a signed graph.
\newblock {\em Michigan Mathematical Journal}, 2:143--146, 1953--54.

\bibitem{Heider_1944}
F.~Heider.
\newblock Social perception and phenomenal causality.
\newblock {\em Psychological Review}, 51:358 -- 378, 1944.

\bibitem{hiller2020burning}
M.~Hiller, A.~M. Koster, and E.~Triesch.
\newblock On the burning number of p-caterpillars.
\newblock In {\em Graphs and Combinatorial Optimization: from Theory to
  Applications: CTW2020 Proceedings}, pages 145--156. Springer, 2020.

\bibitem{leskovec2010signed}
J.~Leskovec, D.~Huttenlocher, and J.~Kleinberg.
\newblock Signed networks in social media.
\newblock In {\em Proceedings of the SIGCHI conference on human factors in
  computing systems}, pages 1361--1370, 2010.

\bibitem{liu2020simple}
W.-C. Liu, L.-C. Huang, C.~W.-J. Liu, and F.~Jord{\'a}n.
\newblock A simple approach for quantifying node centrality in signed and
  directed social networks.
\newblock {\em Applied Network Science}, 5:1--26, 2020.

\bibitem{norin2022burning}
S.~Norin and J.~Turcotte.
\newblock The burning number conjecture holds asymptotically.
\newblock {\em Journal of Combinatorial Theory, Series B}, 168:208--235, 2024.

\bibitem{sun2020stable}
R.~Sun, C.~Chen, X.~Wang, Y.~Zhang, and X.~Wang.
\newblock Stable community detection in signed social networks.
\newblock {\em IEEE Transactions on Knowledge and Data Engineering},
  34(10):5051--5055, 2020.

\bibitem{intervensions}
T.~Valente.
\newblock Network interventions.
\newblock {\em Science (New York, N.Y.)}, 337:49--53, 07 2012.

\bibitem{SignedGraphs}
T.~Zaslavsky.
\newblock Signed graphs.
\newblock {\em Discrete Applied Mathematics}, 4(1):47--74, 1982.

\bibitem{zaslavsky2012mathematical}
T.~Zaslavsky.
\newblock A mathematical bibliography of signed and gain graphs and allied
  areas.
\newblock {\em The Electronic Journal of Combinatorics}, pages DS8--Dec, 2012.

\bibitem{zhou2019cyber}
Y.~Zhou, F.~R. Yu, J.~Chen, and Y.~Kuo.
\newblock Cyber-physical-social systems: A state-of-the-art survey, challenges
  and opportunities.
\newblock {\em IEEE Communications Surveys \& Tutorials}, 22(1):389--425, 2019.

\end{thebibliography}
\addcontentsline{toc}{section}{References}
\bibliographystyle{abbrv}

\end{document}